\documentclass[12pt,reqno]{amsart}

\usepackage{amssymb}
\usepackage{hyperref}
\usepackage{lineno}
\usepackage{verbatim}

\numberwithin{equation}{section}

\newtheorem{theorem}{Theorem}[section]

\newtheorem{lemma}[theorem]{Lemma}
\newtheorem{corollary}[theorem]{Corollary}

\theoremstyle{definition}
\newtheorem{definition}[theorem]{Definition}
\newtheorem{remark}[theorem]{Remark}
\newtheorem{example}[theorem]{Example}

\begin{document}

\baselineskip=15pt

\title[Nilpotent Lie algebras of vector fields]{Nilpotent Lie algebras of vector fields in three variables}

\author[H. Azad]{Hassan Azad}

\address{Abdus Salam School of Mathematical Sciences, GCU, Lahore 54600, Pakistan}

\email{hassan.azad@sms.edu.pk}

\author[I. Biswas]{Indranil Biswas}

\address{Department of Mathematics, Shiv Nadar University, NH91, Tehsil Dadri, Greater Noida, Uttar Pradesh 201314, India}

\email{indranil.biswas@snu.edu.in, indranil29@gmail.com}

\author[R. Ghanam]{Ryad Ghanam}

\address{Department of Liberal Arts and Sciences, Virginia Commonwealth University in Qatar, Doha, Qatar} 

\email{raghanam@vcu.edu}

\subjclass[2010]{17B10, 17B66, 32M25}

\keywords{Vector field, nilpotent Lie algebra, rank of center, Lie-Amaldi classification}

\thanks{Corresponding author: Indranil Biswas}

\date{}

\begin{abstract}
We give a complete constructive description of all finite dimensional nilpotent Lie
algebras of smooth vector fields in three variables, including intransitive algebras.
The description is organized by the rank and dimension of the center, which serves
as the key invariant. Since every nonabelian solvable
algebra lives in the normalizer of a nilpotent algebra, our normal forms provide
the essential building blocks for the study of all solvable algebras of vector
fields in three variables.
\end{abstract}

\maketitle

\section{Introduction}

The classification of finite dimensional Lie algebras of vector fields in three variables,
up to point equivalence, is a classical problem going back to Lie \cite{LiEn} and Amaldi
\cite{Am1, Am2}. The classical sources organize the classification geometrically --- by
systems of imprimitivity and invariant foliations --- without reference to algebraic
structure. In particular, the word nilpotent does not occur in Amaldi \cite{Am1, Am2}.

In this paper we give, for the first time, a complete constructive description of all
finite dimensional nilpotent Lie algebras of vector fields in three variables, including
intransitive algebras, organized by the rank and dimension of the center. Indeed, a comparison
with Section 7 of \cite{Az} shows that the listing of nilpotent Lie algebras of vector fields by
Amaldi is incomplete.

For solvable non-nilpotent algebras in three variables, no such constructive
description is possible in general, because the normal forms of the relevant centers
involve arbitrary functions.

The reason nilpotent algebras are the natural class to treat is that every nonabelian
solvable algebra lives in the normalizer of a nilpotent algebra. Our normal forms
therefore provide the essential building blocks for the study of all solvable algebras
of vector fields.

The finite dimensional algebras of vector fields were completely
classified in \cite{LiEn} up to two variables. No such classification exists for
vector fields in three variables. While the classical
sources treat the three-variable case geometrically, the present paper shows that an
algebraic approach --- using the rank of the center as the organizing invariant ---
gives clean constructive procedures and explicit normal forms in the nilpotent case,
and identifies precisely where and why such procedures must fail for solvable algebras.

If one approaches the subject from an algebraic point of view, the proofs simplify
considerably, as shown in \cite{ABFM}. We have used ideas similar to an earlier paper
\cite{ABMS} on this subject, where the two-dimensional case is treated. At times,
it is possible to determine the change of variables that preserve a given normal form
by determining all low dimensional ideals: see Theorem~\ref{thm1} for an example.

Classical references are \cite[Chapter 25]{LiEn} and \cite{Am1}, \cite{Am2}. We have
given hyperlinks to these sources so that the reader can compare them with the results
given in Sections 3 to 6 of this paper. A list of Amaldi's normal forms is available
in Hillgarter \cite[pp.~30--53]{Hi}. Relatively recent sources are \cite{GKO},
\cite{PBNL} and \cite{Du}. For recent work on transitive and liftable solvable algebras
in $3D$ we refer the reader to \cite{Sc}.

\section{Notation and recollection of basic results}

Throughout this paper $\mathfrak g$ will denote a finite dimensional Lie algebra of $C^\infty$
vector fields defined on some open subset $U\, \subset\, {\mathbb R}^N$. We will call $\mathfrak g$
simply a Lie algebra of vector fields on ${\mathbb R}^N$.

Given two such Lie algebras ${\mathfrak g}_1$ and ${\mathfrak g}_2$, defined respectively on open subsets
$U_1$ and $U_2$ of ${\mathbb R}^N$, Lie considered them to be equivalent if there is a diffeomorphism
$\phi$ defined on an open subset $U\, \subset\, U_1$ such that $\phi(U)\, \subset\, U_2$ and the differential
$d\phi$ of $\phi$ maps ${\mathfrak g}_1$ isomorphically to ${\mathfrak g}_2$.

For notational convenience, $\frac{\partial}{\partial x_i}$ (respectively, $\frac{\partial}{\partial y}$)
will be denoted by $\partial_{x_i}$ (respectively, $\partial_y$).

We say --- informally --- that the Lie algebras ${\mathfrak g}_1$ and ${\mathfrak g}_2$ are equivalent if
there is a $C^\infty$ local change of variables that maps ${\mathfrak g}_1$ isomorphically to ${\mathfrak g}_2$.

\begin{definition}\label{de1}
Let $\mathfrak g$ be a finite dimensional Lie algebra of vector fields defined on an open subset
$U\, \subset\, {\mathbb R}^N$. The {\it rank} of $\mathfrak g$ is
$$
\text{Max}_{x\in U} \dim \{v(x)\,\, \big\vert\,\,\, v\,\, \in\,\, \mathfrak g\}.
$$
\end{definition}

In general, the rank and the dimension are different. For example, the abelian Lie algebra of vector fields
on ${\mathbb R}^2$
$$
\{\partial_{x},\, y\partial_{x},\, y^2\partial_{x},\, \cdots ,\, y^n\partial_{x}\}
$$
is of rank one and dimension $n+1$.

When $N\, \geq\, 2$, the classification, up to point transformations, of abelian Lie algebras of
vector fields in two variables whose rank is $1$ and dimension greater than two, is not known.

A version of the following lemma is in \cite{ABM}.

\begin{lemma}\label{lem1}
Let $\mathfrak g$ denote the Lie algebra of all vector fields on
${\mathbb R}^{k+m}$ that are of the form $\sum_{i=1}^{k+m}
f_i(x_1,\,\cdots,\, x_k)\partial_{x_i}$. Let
$$
{\mathfrak g}_1\ \subset\ {\mathfrak g}
$$
be the Lie subalgebra consisting of elements that are of the form $\sum_{i=1}^{k}
f_i(x_1,\,\cdots,\, x_k)\partial_{x_i}$. Let $\mathcal A$ be the abelian Lie subalgebra of
$\mathfrak g$ in which all the elements are of the form $\sum_{i=k+1}^{k+m}
f_i(x_1,\,\cdots,\, x_k)\partial_{x_i}$. The map
$$
\Pi\ :\ {\mathfrak g}\ \longrightarrow\ {\mathfrak g}_1,\ \ \,
\sum_{i=1}^{k+m} f_i(x_1,\cdots, x_k)\partial_{x_i}\ \longmapsto\ \sum_{i=1}^{k}
f_i(x_1,\cdots, x_k)\partial_{x_i},
$$
is a surjective homomorphism of Lie algebras whose kernel is the abelian Lie algebra $\mathcal A$.
\end{lemma}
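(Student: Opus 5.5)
The plan is to verify everything by one direct computation of the bracket of two elements of $\mathfrak g$. The key observation is that every coefficient depends only on $x_1,\cdots ,x_k$. Consequently the operators $\partial_{x_{k+1}},\cdots ,\partial_{x_{k+m}}$ kill all coefficients, so those components never contribute to the derivative part of a bracket.

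Write $X\,=\,A+V$ with
$$
A\ =\ \sum_{i=1}^k f_i\partial_{x_i}\ \in\ {\mathfrak g}_1, \qquad V\ =\ \sum_{i=k+1}^{k+m} f_i\partial_{x_i}\ \in\ \mathcal A,
$$
and similarly $Y\,=\,B+W$. Then $\mathfrak g\,=\,{\mathfrak g}_1\oplus\mathcal A$ as vector spaces, and $\Pi(X)\,=\,A$.

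First, $[V,\,W]\,=\,0$, since $V$ differentiates only in the variables $x_{k+1},\cdots ,x_{k+m}$ and the coefficients of $W$ do not depend on them. The same argument shows $\mathcal A$ is abelian, and it shows $\mathfrak g$, ${\mathfrak g}_1$ and $\mathcal A$ are closed under brackets with coefficients still functions of $x_1,\cdots ,x_k$ only.

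Next, $[A,\,W]\,=\,\sum_{j>k}A(g_j)\partial_{x_j}\,\in\,\mathcal A$. The term $-W(\text{coefficients of }A)$ vanishes because $W$ differentiates only in $x_{k+1},\cdots ,x_{k+m}$, on which the coefficients of $A$ do not depend. By the same reasoning $[V,\,B]\,\in\,\mathcal A$, so $\mathcal A$ is an ideal. Also $[A,\,B]\,\in\,{\mathfrak g}_1$, because $A$ and $B$ involve only $\partial_{x_i}$ with $i\le k$, and their coefficients depend only on $x_1,\cdots ,x_k$.

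It follows that
$$
[X,\,Y]\ \equiv\ [A,\,B] \pmod{\mathcal A},
$$
and hence $\Pi([X,\,Y])\,=\,[A,\,B]\,=\,[\Pi X,\,\Pi Y]$. So $\Pi$ is a homomorphism. Linearity of $\Pi$ is clear, and $\Pi$ is surjective because it restricts to the identity on ${\mathfrak g}_1\,\subset\,\mathfrak g$. Finally, the kernel consists of exactly those $X$ with $f_1\,=\,\cdots\,=\,f_k\,=\,0$, which is $\mathcal A$.

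There is no real obstacle here. The one point needing care is to state explicitly that the cross terms like $W(f_i)$ vanish, and this rests on the coefficients being independent of $x_{k+1},\cdots ,x_{k+m}$.
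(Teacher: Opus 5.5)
Your argument is correct and complete. Splitting $X=A+V$ and $Y=B+W$, and using that the $\partial_{x_{k+1}},\dots,\partial_{x_{k+m}}$ components annihilate every coefficient, gives $[X,Y]\equiv[A,B] \pmod{\mathcal A}$, so $\Pi$ is a homomorphism; surjectivity via $\Pi|_{{\mathfrak g}_1}=\mathrm{id}$ and the identification of the kernel with $\mathcal A$ are then immediate. The paper gives no proof of this lemma (it only cites \cite{ABM} for a version of it), and your direct bracket computation is the routine verification it leaves implicit.
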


Lemma \ref{lem1} has the following immediate consequence.

\begin{corollary}\label{cor1}
If $\mathcal L$ is a finite dimensional Lie subalgebra of $\mathfrak g$, then a basis
of the abelian ideal ${\mathcal L}\bigcap \mathcal A$ and a lift of any basis of the
image $\Pi({\mathcal L})\, \subset\, {\mathfrak g}_1$ (see Lemma \ref{lem1} for $\Pi$)
together give a basis of $\mathcal L$.

Moreover, $\mathcal L$ is nilpotent if and only if its image $\Pi({\mathcal L})$ is nilpotent
and $\mathcal L$ operates nilpotently on ${\rm kernel}(\Pi)$.
\end{corollary}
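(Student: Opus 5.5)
The plan is to apply Lemma \ref{lem1} to the restriction $\Pi|_{\mathcal L}$. This is a Lie algebra homomorphism $\mathcal L \to {\mathfrak g}_1$. Its kernel is ${\mathcal L}\cap{\rm kernel}(\Pi) = {\mathcal L}\cap\mathcal A$, which is an ideal of $\mathcal L$, being the kernel of a homomorphism. It is abelian because $\mathcal A$ is abelian. For the basis statement, consider the short exact sequence of vector spaces
$$0\to {\mathcal L}\cap\mathcal A \to \mathcal L \to \Pi(\mathcal L)\to 0 .$$
Let $a_1,\dots,a_r$ be a basis of ${\mathcal L}\cap\mathcal A$ and $b_1,\dots,b_s$ a basis of $\Pi(\mathcal L)$, and choose $\tilde b_j\in\mathcal L$ with $\Pi(\tilde b_j)=b_j$. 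This is standard linear algebra. For spanning, take $v\in\mathcal L$ and write $\Pi(v)=\sum c_jb_j$; then $v-\sum c_j\tilde b_j$ lies in the kernel, so it is a combination of the $a_i$. For independence, apply $\Pi$ to a vanishing combination: this kills the $b$-coefficients, and the $a$-coefficients then vanish as well.

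For the nilpotency criterion, the forward direction comes first. If $\mathcal L$ is nilpotent, then its homomorphic image $\Pi(\mathcal L)$ is nilpotent. Moreover, for $X_1,\dots,X_n\in\mathcal L$ with $n$ large, the iterated bracket ${\rm ad}X_1\cdots{\rm ad}X_n$ vanishes on all of $\mathcal L$, and in particular on the ideal ${\mathcal L}\cap\mathcal A$. So $\mathcal L$ acts nilpotently on ${\mathcal L}\cap\mathcal A$. I will interpret ``operates nilpotently on ${\rm kernel}(\Pi)$'' as referring to this intersection; note that $\mathcal A$ itself is infinite dimensional and is not preserved by $\mathcal L$ in the required sense.

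For the converse, suppose $\Pi(\mathcal L)$ is nilpotent, so $C^{p}\Pi(\mathcal L)=0$ for some $p$, where $C^i$ denotes the lower central series. Since $\Pi$ is a homomorphism, $\Pi(C^{p}\mathcal L)=C^{p}\Pi(\mathcal L)=0$, and hence $C^{p}\mathcal L\subset {\mathcal L}\cap\mathcal A$. Next, nilpotent action of $\mathcal L$ on the finite dimensional space ${\mathcal L}\cap\mathcal A$ should mean that products of ${\rm ad}$'s of length $q$ vanish there. I will take that as the definition. The alternative is that each ${\rm ad}X$ is nilpotent, in which case Engel's theorem, applied to the associative/Lie algebra of operators ${\rm ad}(\mathcal L)|_{{\mathcal L}\cap\mathcal A}$, upgrades it to this uniform statement: there is a flag in ${\mathcal L}\cap\mathcal A$ that is lowered by every ${\rm ad}X$. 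It follows that $C^{p+q}\mathcal L=[\mathcal L,[\dots,[\mathcal L,C^{p}\mathcal L]]]\subset {\rm ad}(\mathcal L)^q({\mathcal L}\cap\mathcal A)=0$, so $\mathcal L$ is nilpotent.

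The only delicate point is this Engel step, which is needed to pass from elementwise nilpotency to the uniform bound. It is also where finite dimensionality of $\mathcal L$ is used. Everything else is formal.
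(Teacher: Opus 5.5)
Your argument is correct: it is the routine verification the paper leaves implicit when it calls this an ``immediate consequence'' of Lemma~\ref{lem1}, and reading ${\rm kernel}(\Pi)$ as ${\mathcal L}\cap\mathcal A$ is indeed the reading under which the statement is true. One small correction to your side remark: $\mathcal A$ is the kernel of $\Pi$ on $\mathfrak g$, hence an ideal that $\mathcal L$ does preserve; the real reason the restriction is forced is that $\mathcal L$ need not act nilpotently on all of $\mathcal A$ (e.g.\ $\mathcal L=\langle\partial_{x_1}\rangle$ is nilpotent, yet $\partial_{x_1}$ fixes $e^{x_1}\partial_{x_2}$ when $k=m=1$).
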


\section{Nilpotent subalgebras of vector fields with three variables}

Let $V({\mathbb R}^3)$ denote the $C^\infty$ vector fields with three variables.

As explained in the Introduction, the centers of nonabelian algebras always give invariant foliations. These 
nontrivial centers give projections to lower dimensional nilpotent algebras, and the kernels of these 
projections are also abelian algebras. As the kernels and the centers in general involve arbitrary 
functions, it is not possible to give a classification up to point equivalence.

Therefore, all one can do is to give constructive procedures for generating nilpotent algebras. We proceed 
to do this according to the rank of the center of the algebra.

The forms of the nilpotent algebras show why it is not possible to do the same for solvable non-nilpotent 
algebras. All one can say is that a solvable algebra must live in the normalizer of a nilpotent algebra. 
Because of the presence of arbitrary functions, a constructive procedure, for solvable non-nilpotent 
algebras is, in general, not available.

Let us first record the classification of abelian Lie subalgebras of $V({\mathbb R}^3)$.

\subsection{The case of rank three}\label{s3.1}

If $\mathfrak g$ is an abelian Lie subalgebra of rank three, then locally $\mathfrak g$ --- in
suitable coordinates --- is equivalent to $\langle \partial_x,\, \partial_y,\,\partial_z\rangle$.
We use the notation $\langle \partial_x,\, \partial_y,\,\partial_z\rangle$ to denote the algebra
generated by $\partial_x$, $\partial_y$ and $\partial_z$.

\subsection{The case of rank two}\label{s3.2}

If $\mathfrak g$ is abelian and of rank two, then $\mathfrak g$ --- in
suitable coordinates --- is equivalent to an algebra with basis
$$
\langle \partial_x,\, \partial_y, f_1(z)\partial_x+g_1(z) \partial_y,\, \cdots,\, f_N(z)\partial_x+
g_N(z) \partial_y\rangle.
$$

\subsection{The case of rank one}\label{s3.3}

If $\mathfrak g$ is abelian and of rank one, then $\mathfrak g$ --- in
suitable coordinates --- is equivalent to an algebra with basis
$$
\langle \partial_x,\, f_1(y,z)\partial_x,\, \cdots,\, f_N(y,z)\partial_x \rangle.
$$

\begin{remark}\label{rem1}
Subsections \ref{s3.1}, \ref{s3.2} and \ref{s3.3} do not provide a classification up to point transformations
because of the presence of arbitrary functions. At best we can call these as normal forms of the algebra.
\end{remark}

\begin{corollary}\label{cor2}
If $\mathfrak g$ is a finite dimensional nilpotent non-abelian Lie algebra of vector fields on
${\mathbb R}^3$, then its center can be of rank at most two.
\end{corollary}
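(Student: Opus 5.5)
Suppose the center $\mathfrak z$ of $\mathfrak g$ has rank three; I will show that $\mathfrak g$ must then be abelian. The center $\mathfrak z$ is an abelian Lie algebra of vector fields on ${\mathbb R}^3$. By Subsection \ref{s3.1}, on a suitable open set and in suitable coordinates, $\mathfrak z$ contains $\partial_x,\,\partial_y,\,\partial_z$. To justify this, pick $Z_1,Z_2,Z_3\in\mathfrak z$ that are linearly independent at some point, and hence on a neighborhood. These commute, so they can be simultaneously straightened to $\partial_x,\partial_y,\partial_z$.

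Next I use centrality. Any $X=a\,\partial_x+b\,\partial_y+c\,\partial_z\in\mathfrak g$ commutes with $\partial_x,\partial_y,\partial_z$. Hence all partial derivatives of $a,b,c$ vanish, and $a,b,c$ are constants on the (connected) neighborhood. So every element of $\mathfrak g$ is a constant-coefficient vector field, i.e.\ lies in $\langle\partial_x,\partial_y,\partial_z\rangle$. Such fields pairwise commute, so $\mathfrak g$ is abelian. This contradicts the hypothesis that $\mathfrak g$ is non-abelian.

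Therefore the rank of the center is at most two. Nilpotency is not actually used beyond noting that the center is a well-defined abelian subalgebra; nilpotency does guarantee the center is nonzero, so its rank is at least one.

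The only delicate point is the local nature of the argument. Vector fields are smooth, not analytic, so I cannot conclude globally that $[X,Y]=0$ from its vanishing near one point. I would handle this as follows. The set where the center has rank three is open and, by the definition of rank as a maximum, nonempty. On this set we get $[X,Y]=0$ on an open subset around each such point. Since the paper works up to local equivalence, restricting $\mathfrak g$ to a small open set where the rank is attained gives a non-abelian algebra that is abelian there, a contradiction. If one insists on the original domain, I would add the implicit genericity assumption (as the paper does elsewhere) that the structure constants are determined on any open set. This holds because $\mathfrak g$ is finite dimensional: the restriction map to an open subset is a Lie homomorphism, and one works with the algebra as given locally.
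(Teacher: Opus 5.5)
Your first two paragraphs are exactly the paper's proof. You straighten three central fields that are independent at a point to $\partial_x,\partial_y,\partial_z$ (Subsection~\ref{s3.1}), note that anything commuting with all three has constant coefficients, and conclude that $\mathfrak g$ is abelian there. You are also right that nilpotency is not used. In the local sense in which the paper reads all its statements, this is a complete proof.

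Your last paragraph, however, asserts something false. For $C^\infty$ fields, finite dimensionality does not make restriction to an open subset injective. Without a local convention the statement itself fails, even on the connected domain $U={\mathbb R}^3$. Take $\theta(t)=e^{-1/t}$ for $t>0$ and $\theta(t)=0$ for $t\le 0$, put $\rho(z)=\theta(-1-z)$, and let $\eta$ be smooth with $\eta\equiv 1$ on $(-\infty,-1]$ and $\eta\equiv 0$ on $[0,\infty)$. Consider the six fields
\[
Z_1=\eta(z)\partial_x,\ \ Z_2=\eta(z)\partial_y,\ \ Z_3=\rho(z)\partial_z,\ \ X=\theta(z)\partial_x,\ \ Y=x\theta(z)\partial_y,\ \ W=\theta(z)^2\partial_y .
\]
They are linearly independent, and the only nonzero bracket among them is $[X,Y]=W$; all the others vanish, using $\eta\theta=\rho\eta'=\rho\theta'=0$. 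So they span a nilpotent non-abelian algebra isomorphic to ${\mathbb R}^3\oplus\mathfrak h_3$. Its center $\langle Z_1,Z_2,Z_3,W\rangle$ has rank three at every point with $z<-1$.

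Restricting to a neighbourhood of such a point kills $X$, $Y$, $W$. So your \emph{non-abelian algebra that is abelian there} is no contradiction; the restriction is simply abelian. A disconnected $U$ gives an even simpler counterexample. Hence faithfulness of restriction cannot be derived from finite dimensionality; it has to be assumed. You can either read the corollary, as the paper tacitly does, for the algebra on a small neighbourhood of a generic point, with non-abelianness meant there. Or you can restrict to real-analytic fields on a connected domain, where the identity theorem makes restriction injective and your argument then gives the global statement.
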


\begin{proof}
For a vector field $v$ on ${\mathbb R}^3$, if $[v,\, \partial_x]\,=\, [v,\, \partial_y]\,=\, [v,\, \partial_z]
\,=\, 0$, then $v\, \in\, \langle \partial_x,\, \partial_y,\,\partial_z\rangle$. So the statement follows
from Subsection \ref{s3.1}.
\end{proof}

We now proceed to describe all finite dimensional nilpotent non-abelian Lie algebras
of vector fields on ${\mathbb R}^3$.

\section{Center has rank 2}

\begin{theorem}\label{thm1}
Let ${\mathfrak g}\, \subset\, V({\mathbb R}^3)$ be a finite dimensional nilpotent non-abelian
Lie algebra of vector fields whose center is of rank two. Denote by
$Z({\mathfrak g})$ the center of $\mathfrak g$. Then the following statements are valid.
\begin{enumerate}
\item[(a)]
The center $Z({\mathfrak g})$ is two-dimensional, and there
are local coordinates $x$, $y$, $z$ such that
$$
{\mathfrak g}\ =\ \langle \partial_z\rangle\ltimes {\mathcal A},
$$
where ${\mathcal A}$ is an abelian ideal whose every element is of the form $f(z)\partial_x+g(z)\partial_y$
with $f$ and $g$ being polynomials in the variable $z$. Moreover, the endomorphism
\begin{equation}\label{e3}
{\mathcal A}\ \longrightarrow\ {\mathcal A},\ \ \ v\, \longmapsto\, [\partial_z,\, v]
\end{equation}
has two Jordan blocks whose generators are $P(z)\partial_x+Q(z)\partial_y$ and
$R(z)\partial_x+S(z)\partial_y$, where ${\rm degree}(P)\,>\,{\rm degree}(Q)$ and
${\rm degree}(R)\,<\,{\rm degree}(S)$.

\item[(b)] All the one-dimensional ideals of ${\mathfrak g}/Z({\mathfrak g})$ are of the form
$J/Z({\mathfrak g})$, where
$$
J \ =\ \langle \partial_x,\, \partial_y,\, z(\alpha\partial_x+\beta\partial_y)\rangle
$$
with $\alpha$ and $\beta$ being constants.

\item[(c)] If there is a change of coordinates $\widetilde{x}$, $\widetilde{y}$, $\widetilde{z}$
in which $Z({\mathfrak g})\,=\, \langle \partial_{\widetilde{x}},\, \partial_{\widetilde{y}}\rangle$ and
$\partial_{\widetilde{z}}\,\in\, {\mathfrak g}$, then $$\widetilde{x}\,=\, ax+by+\phi(z),\ \,
\widetilde{y}\,=\, cx+dy+\psi(z), \ \,\widetilde{z}\,=\,\lambda z+\mu,
$$
where $\det \begin{pmatrix}a & b\\ c & d\end{pmatrix}\, \not=\, 0$ and $\lambda\, \not=\, 0$ while $\phi$ and
$\psi$ are polynomials in $z$.
\end{enumerate}
\end{theorem}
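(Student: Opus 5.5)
The plan is to pin down the center first, then project onto the $z$--direction with Lemma~\ref{lem1}, and read off everything else from the action of $\partial_z$ on the kernel.

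\emph{Part (a).} By Subsection~\ref{s3.2}, after a change of coordinates $Z({\mathfrak g})=\langle \partial_x,\partial_y, f_i(z)\partial_x+g_i(z)\partial_y\rangle$. An element of $\mathfrak g$ commutes with $\partial_x$ and $\partial_y$, so it has the form $a(z)\partial_x+b(z)\partial_y+c(z)\partial_z$. Commuting with $f_i\partial_x+g_i\partial_y$ gives $cf_i'=cg_i'=0$. Since $\mathfrak g$ is non-abelian, some element has $c\neq 0$, so every $f_i,g_i$ is constant and $Z({\mathfrak g})=\langle\partial_x,\partial_y\rangle$ is two-dimensional.

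Now apply Lemma~\ref{lem1} with $k=1$ (variable $z$) and Corollary~\ref{cor1}. The image $\Pi({\mathfrak g})$ is a nonzero finite dimensional nilpotent algebra of vector fields on the line. By Lie's classification in one variable it lies in a copy of $\mathfrak{sl}_2$, so it is one-dimensional, and we rectify it to $\langle\partial_z\rangle$. A lift $\partial_z+a(z)\partial_x+b(z)\partial_y$ becomes $\partial_z$ after $x\mapsto x-\int a$, $y\mapsto y-\int b$. This change keeps $Z$ equal to $\langle\partial_x,\partial_y\rangle$.

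The kernel ${\mathcal A}={\mathfrak g}\cap\langle f(z)\partial_x+g(z)\partial_y\rangle$ is abelian, and $\operatorname{ad}\partial_z$ acts on it as $d/dz$. Corollary~\ref{cor1} says this action is nilpotent, so some derivative of $f$ and $g$ vanishes and they are polynomials. The kernel of $d/dz$ on $\mathcal A$ is the constants, i.e. $Z({\mathfrak g})$, which is two-dimensional. Hence there are exactly two Jordan blocks, of sizes $m\ge n$, with generators $u,w$ of degrees $m-1$ and $n-1$.

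To get the degree conditions, use a constant linear change of $(x,y)$ to make the leading coefficient vector of $u$ equal to $e_1$. Next replace $w$ by $w-\gamma u^{(m-n)}$ so that the $e_1$--component of its top coefficient vanishes. If this makes $w$ drop in degree, it produces a new chain. Comparing dimensions of $\ker(d/dz)^j$ then forces a contradiction with the block sizes, so the top coefficient of $w$ is a nonzero multiple of $e_2$. After this, $\deg P>\deg Q$ and $\deg R<\deg S$.

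\emph{Part (b).} Let $J\supset Z$ with $J/Z=\langle \bar v\rangle$ an ideal, so $[{\mathfrak g},v]\subset J$.
\begin{itemize}
\item If $v\in{\mathcal A}$, then $[\partial_z,v]=v'$ must lie in $J$. Nilpotency of $\operatorname{ad}\partial_z$ forces $v'\in Z$, so $v$ has degree $1$ and $J=\langle\partial_x,\partial_y,z(\alpha\partial_x+\beta\partial_y)\rangle$.
\item If $v$ has a nonzero $\partial_z$--component, then $[v,{\mathcal A}]\subset J$ means differentiation maps $\mathcal A$ into $Z$. So every element of $\mathcal A$ has degree $\le 1$.
\end{itemize}
I expect the second case to be the main obstacle. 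The natural candidate $\langle\partial_x,\partial_y,\partial_z\rangle$ then looks like an ideal modulo $Z$, and I would have to show the degree normalisation of (a) excludes it, or that the statement is meant for ideals inside ${\mathcal A}/Z$. I would check the small Heisenberg-type cases explicitly first.

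\emph{Part (c).} Assume the new coordinates satisfy $\langle\partial_{\widetilde x},\partial_{\widetilde y}\rangle=\langle\partial_x,\partial_y\rangle$ as spans of constant-coefficient fields. Then $\widetilde z$ depends only on $z$, and $\widetilde x,\widetilde y$ are affine in $(x,y)$ with constant linear part: $\widetilde x=ax+by+\phi(z)$, $\widetilde y=cx+dy+\psi(z)$. Writing $\partial_{\widetilde z}$ in old coordinates gives
$$\partial_{\widetilde z}=\frac{1}{\widetilde z'(z)}\bigl(\partial_z-(\text{terms in }\phi',\psi')\bigr).$$
Membership in $\mathfrak g$ forces its $\Pi$--image to be a constant multiple of $\partial_z$, so $\widetilde z=\lambda z+\mu$. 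Its ${\mathcal A}$--part must then be polynomial, which makes $\phi'$ and $\psi'$, hence $\phi$ and $\psi$, polynomials. Alternatively, (c) can be derived from (b) by noting that such a change must permute the low-dimensional ideals.
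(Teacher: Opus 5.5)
Your part (a) follows the paper's route and is correct. The degree normalisation can be done in one step, as the paper does. In any Jordan basis the bottoms of the two chains form a basis of $\ker(d/dz)=\langle\partial_x,\partial_y\rangle$, and the bottom of the chain through $u$ is $(\deg u)!$ times the top coefficient of $u$. So one linear change of $(x,y)$ sending the two bottoms to $\partial_x,\partial_y$ gives both degree inequalities.

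Your part (c) takes a different and shorter route. The paper applies (b) in both coordinate systems and compares $\widetilde z(a\partial_{\widetilde x}+b\partial_{\widetilde y})$ with $z(k\partial_x+l\partial_y)$ to conclude that $\widetilde z$ is affine in $z$. You read this off directly from $\Pi(\partial_{\widetilde z})=(d\widetilde z/dz)^{-1}\,\partial_z\in\mathbb R\,\partial_z$. You then get polynomiality of $\phi',\psi'$ because $\partial_{\widetilde z}-\lambda^{-1}\partial_z$ lies in $\mathcal A$. Not depending on (b) is a genuine advantage, because of the following.

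In (b), the case you left open cannot be closed: it contains counterexamples. Take $\mathfrak g=\langle\partial_z,\partial_x,\partial_y,z\partial_x\rangle$.
\begin{enumerate}
\item It is nilpotent and non-abelian, and $Z(\mathfrak g)=\langle\partial_x,\partial_y\rangle$ has rank two.
\item It already satisfies every normalisation of (a), with block generators $z\partial_x$ and $\partial_y$.
\item Since $[\partial_z,z\partial_x]=\partial_x\in Z(\mathfrak g)$, the quotient $\mathfrak g/Z(\mathfrak g)$ is two-dimensional abelian. Hence $\langle\partial_x,\partial_y,\partial_z\rangle/Z(\mathfrak g)$ is a one-dimensional ideal that is not of the stated form.
\end{enumerate}

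The paper's proof covers this case in the paragraph misprinted as ``$\lambda=0$''; it should read $\lambda\neq 0$. It straightens $J$ to $\langle\partial_{\widetilde x},\partial_{\widetilde y},\partial_{\widetilde z}\rangle$ and asserts that $[\partial_{\widetilde z},X]\in J$ forces $P$ and $Q$ to be constant, so that $\mathfrak g$ would be abelian. In fact it only forces $P'$ and $Q'$ to be constant. That is exactly your conclusion that every element of $\mathcal A$ has degree at most $1$, and it is no contradiction. So neither the normalisation of (a) nor any further argument will exclude this case.

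What your two cases actually prove is the corrected statement:
\begin{enumerate}
\item Every one-dimensional ideal of $\mathfrak g/Z(\mathfrak g)$ that is contained in $\mathcal A/Z(\mathfrak g)$ has the stated form (your first case).
\item All one-dimensional ideals have this form if and only if $\mathfrak g/Z(\mathfrak g)$ is non-abelian. Equivalently, some element of $\mathcal A$ has degree at least $2$, i.e.\ some Jordan block has size at least $3$. The ``if'' direction is your second case, which then becomes impossible.
\item When all blocks have size at most $2$, every line in $\mathfrak g/Z(\mathfrak g)$ is an ideal, and (b) fails.
\end{enumerate}

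You should state and prove (b) in this corrected form, with the example above, rather than search for a contradiction.
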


\begin{proof}[{Proof of Part (a)}]
Choose coordinates in which $Z({\mathfrak g})\,\subset\, \langle \partial_x,\, \partial_y\rangle$.
Consequently, each element of ${\mathfrak g}$ is of the form $P(z)\partial_x+Q(z)\partial_y+R(z)
\partial_z$.

Denote by $V({\mathbb R})$ the Lie algebra of vector fields in one variable. The projection map
\begin{equation}\label{e2}
\Pi\ :\ {\mathfrak g}\ \longrightarrow \ V({\mathbb R}),\ \ P(z)\partial_x+Q(z)\partial_y+R(z)
\partial_z\ \longmapsto\ R(z)\partial_z.
\end{equation}
is evidently a homomorphism of Lie algebras. Note that $\Pi({\mathfrak g})\, \not=\, 0$, because
the contrary would imply that $\mathfrak g$ is abelian.

The only nilpotent Lie algebras of $V({\mathbb R})$ are one-dimensional. Therefore, if
$R_0(z)\partial_z$ is a nonzero element of $\Pi({\mathfrak g})$, then $\Pi({\mathfrak g})\,=\,
{\mathbb R}\cdot R_0(z)\partial_z$, which means that every element of the image of $\Pi$ is a constant
scalar multiple of $R_0(z)\partial_z$.

Fix a nonzero element $R_0(z)\partial_z\, \in\,\Pi({\mathfrak g})$. Choose coordinates $\widetilde{x}$,
$\widetilde{y}$ and $\widetilde{z}$ on ${\mathbb R}^3$ such that
$$
\widetilde{x}\ =\ x,\ \ \widetilde{y}\ =\ y,\ \ \widetilde{z}\ =\ \int^z_0 R_0(t)dt.
$$
Consequently, every element of $\mathfrak g$ is of the form $P(\widetilde{z})\partial_{\widetilde{x}}+
Q(\widetilde{z})\partial_{\widetilde{y}}+ \lambda\cdot \partial_{\widetilde{z}}$, where $\lambda\, \in\,
{\mathbb R}$.

Fix functions $P_0(\widetilde{z})$ and $Q_0(\widetilde{z})$ that satisfy the condition that
\begin{equation}\label{e1}
V\ :=\ P_0(\widetilde{z})\partial_{\widetilde{x}}+ Q_0(\widetilde{z})\partial_{\widetilde{y}}+\partial_{\widetilde{z}}
\ \in\ {\mathfrak g}.
\end{equation}

Note that the linear span of $V$, $\partial_{\widetilde{x}}$ and $\partial_{\widetilde{y}}$ is an abelian Lie
algebra of rank three. Thus, there is a system of coordinates $\widehat x$, $\widehat y$, $\widehat z$
on ${\mathbb R}^3$ in which $\partial_{\widehat{x}}\,=\, \partial_{\widetilde{x}}$, $\partial_{\widehat{y}}
\,=\, \partial_{\widetilde{y}}$ and $\partial_{\widehat{z}}\,=\, V$ (see \eqref{e1}). This means that
$$
\widehat{x}\ =\ \widetilde{x}+f(\widetilde{z}),\ \ \widehat{y}\ =\ \widetilde{y}+g(\widetilde{z}),
\ \ \widehat{z}\ =\ h(\widetilde{z}).
$$
Therefore, using these coordinates and removing the hats, we can assume that every element of $\mathfrak g$
is of the form $P(z)\partial_x+Q(z)\partial_y + \lambda\cdot \partial_z$, and
$\partial_z\, \in\, \mathfrak g$.

As $\langle \partial_x,\, \partial_y\rangle\, \subset\, Z({\mathfrak g})$, and the rank of the
center $Z(\mathfrak g)$ of $\mathfrak g$ is two, if $V\,=\,P(z)\partial_x+Q(z)\partial_y\, \in\,
Z(\mathfrak g)$, then $[\partial_z,\, V]\, =\, 0$ gives that $P$ and $Q$ must be constants.
Consequently, we have
$$
Z(\mathfrak g)\ =\ \langle \partial_x,\, \partial_y\rangle.
$$
Moreover, since $\mathfrak g$ is nilpotent, if we have $P(z)\partial_x+Q(z)
\partial_y+\lambda\partial_z\, \in\, \mathfrak g$, then $P$ and $Q$ must be polynomials.
Thus
$$
\mathfrak g\ =\ \langle \partial_z\rangle \ltimes \text{kernel}(\Pi),
$$
where $\Pi$ is the homomorphism in \eqref{e2}, and all elements in
$\text{kernel}(\Pi)$ are of the form $P(z)\partial_x+Q(z)\partial_y$ with
$P$ and $Q$ being polynomials.

Denote $\text{kernel}(\Pi)$ by $\mathcal A$.

The dimension of the kernel of the homomorphism
$$
B_z\ :\ {\mathcal A}\ \longrightarrow\ {\mathcal A},\ \ \ v\, \longmapsto\, [\partial_z,\, v]
$$
(see \eqref{e3}) gives the number of the Jordan blocks of $B_z$. Clearly, we have
$$
\text{kernel}(B_z)\,\ =\,\ \langle \partial_x,\ \partial_y\rangle .
$$
Thus, there are two Jordan blocks of $B_z$, generated by $P_1(z)\partial_x+
Q_1(z)\partial_y$ and $P_2(z)\partial_x+ Q_2(z)\partial_y$ that terminate in
$\alpha\partial_x+\beta\partial_y$, $\gamma\partial_x+ \delta\partial_y$
with
$$\det \begin{pmatrix}\alpha & \beta\\ \gamma & \delta \end{pmatrix}\ \not=\ 0
$$
Therefore, if we set $\alpha x+\beta y\,=\, \widetilde{x}$,
$\gamma x+ \delta y\,=\, \widetilde{y}$ and $z\,=\, \widetilde{z}$, then
the block generators are $$\widetilde{P}_1(\widetilde{z})\partial_{\widetilde{x}}+
\widetilde{Q}_1(\widetilde{z})\partial_{\widetilde{y}},\ \ \, \widetilde{P}_2(\widetilde{z})
\partial_{\widetilde{x}}+ \widetilde{Q}_2(\widetilde{z})\partial_{\widetilde{y}},$$
and they terminate in $\partial_{\widetilde{x}}$ and $\partial_{\widetilde{y}}$.

Thus we may suppose that the generators of the blocks are
$$
P_1(z)\partial_x+ Q_1(z)\partial_y,\ \ \, P_2(z)\partial_x+ Q_2(z)\partial_y,
$$
and they terminate in $\partial_{x}$ and $\partial_{y}$. Therefore,
$\text{degree}(P_1)\, >\, \text{degree}(Q_1)$ and $\text{degree}(P_2)\,<\, \text{degree}(Q_2)$.
This completes the proof of Part (a).

\textbf{Proof of Part (b):}\, Let $J/Z({\mathfrak g})$ be a one-dimensional
ideal of ${\mathfrak g}/Z({\mathfrak g})$. Thus
$$
J\ =\ \langle\partial_{x},\, \partial_{y},\, P_0(z)\partial_{x}+Q_0(z)\partial_{y}
+ \lambda \partial_{z}\rangle .
$$
If $\lambda\,=\, 0$, then arguing as in the proof of Part (a), there are coordinates
$\widetilde{x}$, $\widetilde{y}$, $\widetilde{z}$ such that
$$
J\ =\ \langle\partial_{\widetilde{x}},\ \partial_{\widetilde{y}},\ 
\partial_{\widetilde{z}}\rangle .
$$
Consequently, if $P(\widetilde{z})\partial_{\widetilde{x}}+ Q(\widetilde{z})
\partial_{\widetilde{y}}+\lambda\partial_{\widetilde{z}}\, \in\, {\mathfrak g}$, then
we have $$P'(\widetilde{z})\partial_{\widetilde{x}}+ Q'(\widetilde{z})
\partial_{\widetilde{y}}\ \in\ J,
$$
and therefore $P$ and $Q$ must be constants. But in that case $\mathfrak g$ would
be abelian. Hence invoking a change of coordinates we may assume that all elements of
$J$ are of the form $P(z)\partial_{x}+ Q(z)\partial_{y}$ with $\text{degree}(P)\, \leq\,1$
and $\text{degree}(Q)\, \leq\,1$.

Thus, it follows that
$$
J\ =\ \langle \partial_{x},\, \partial_{y},\, (az+b)\partial_{x}+ (cz+d)\partial_{y}\rangle
\ =\ \langle \partial_{x},\, \partial_{y},\, z(a\partial_{x}+c\partial_{y})\rangle,
$$
where one of $a$ and $c$ is not zero. This completes the proof of Part (b).

\textbf{Proof of Part (c):}\, Let $\widetilde{x}$, $\widetilde{y}$, $\widetilde{z}$ be
coordinates for which
$$
Z({\mathfrak g})\ =\ \langle\partial_{\widetilde{x}},\, \partial_{\widetilde{y}}\rangle
\ =\ \langle\partial_{x},\, \partial_{y}\rangle .
$$
Using $\partial_{x}\,=\, \alpha \partial_{\widetilde{x}}+\beta \partial_{\widetilde{y}}$,
$\partial_{y}\,=\, \gamma \partial_{\widetilde{x}}+\delta \partial_{\widetilde{y}}$
with
$$
\det \begin{pmatrix}\alpha & \beta\\ \gamma & \delta \end{pmatrix}\ \not=\ 0,
$$
we see that
$$
\widetilde{x}\ =\ \alpha x+ \gamma y+f(z),\ \ \,
\widetilde{y}\ =\ \beta x+ \delta y+g(z).
$$

Take an ideal $J\, \supset\, Z({\mathfrak g})$ with $\dim J/Z({\mathfrak g})\,=\,1$.
In coordinates $\widetilde{x}$, $\widetilde{y}$, $\widetilde{z}$,
$$
J\ =\ \langle \widetilde{z}(a \partial_{\widetilde{x}}+b \partial_{\widetilde{y}}),\
\partial_{\widetilde{x}},\ \partial_{\widetilde{y}}\rangle
$$
and in coordinates $x$, $y$, $z$,
$$
J\ =\ \langle z(k\partial_{x}+l\partial_{y}),\ \partial_{x},\ \partial_{y}\rangle.
$$
Consequently,
$$
\widetilde{z}(a \partial_{\widetilde{x}}+b \partial_{\widetilde{y}})\ =\
z(k\partial_{x}+l\partial_{y})+ m\partial_{x}+n\partial_{y}.
$$
Thus, if, say $a\, \not=\, 0$,
$$
a\widetilde{z}\ =\ z(k\alpha+lr) +m\alpha+nr
$$
gives
$$
\widetilde{x}\,=\, Ax+By+\phi(z),\ \widetilde{y}\,=\,Cx+Dy+\psi(z),\
\widetilde{z}\,=\, \lambda z+\mu .
$$
Expressing $\partial_{z}$ in terms of $\widetilde{x}$, $\widetilde{y}$,
$\widetilde{z}$ we find that $\phi'(z)$ and $\psi'(z)$ are polynomials in
$\widetilde{z}$ and therefore they are polynomials in $z$. Consequently,
$\phi$ and $\psi$ are polynomials in $z$.
\end{proof}

\section{Center has rank 1 and dimension at least 2}

As before, $V({\mathbb R}^3)$ denotes the $C^\infty$ vector fields with three variables.
Similarly, $V({\mathbb R}^2)$ denotes the $C^\infty$ vector fields with two variables.

\begin{theorem}\label{thm2}
Let ${\mathfrak g}\, \subset\, V({\mathbb R}^3)$ be a finite dimensional nonabelian nilpotent
Lie algebra whose center $Z({\mathfrak g})$ is of rank one and $\dim Z({\mathfrak g})\, \geq
\, 2$. Then the following statements are valid.
\begin{enumerate}
\item[(a)] There are local coordinate functions $(x,\, y,\, z)$ such that $\mathfrak g$
is a subalgebra of the Lie algebra $\widetilde{\mathfrak g}$ of all vector fields
of the form $f(y)\partial_{x}+g(x,y)\partial_{z}$, and
$$\partial_{x},\ \partial_{z}\,\ \in\,\ {\mathfrak g}$$ with $\partial_{z}\,\in\,
Z({\mathfrak g})$.

\item[(b)] Consider the homomorphism
$$
\Pi\ :\ \widetilde{\mathfrak g}\ \longrightarrow\ V({\mathbb R}^2),\ \ \,
f(y)\partial_{x}+g(x,y)\partial_{z}\ \longmapsto\ f(y)\partial_{x}.
$$
Choosing finitely many linearly independent vector fields
$\{f_i(y)\partial_{x}\}_{i=1}^m$ and lifting them
to $\widetilde{\mathfrak g}$, and finitely many vector fields of the form
$\{g_i(x,y)\partial_{z}\}_{i=1}^n$ that are polynomials in $x$, the Lie algebra generated by
$\{f_i(y)\partial_{x}\}_{i=1}^m$, $\{g_i(x,y)\partial_{z}\}_{i=1}^n$, $\partial_{z}$
and $\partial_{x}$ is a finite dimensional nilpotent nonabelian Lie algebra $\mathfrak g$.

The center $Z({\mathfrak g})$ of $\mathfrak g$ is of rank one, and $\dim Z({\mathfrak g})\,\geq
\,2$ if one of the elements of $\{g_i(x,y)\partial_{z}\}_{i=1}^n$ is of positive degree in $x$.

\item[(c)] All elements of $Z({\mathfrak g})$ (in Statement (b)) are of the
form $g(y)\partial_{z}$.
\end{enumerate}
\end{theorem}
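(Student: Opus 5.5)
Part (a) is the substantive claim; (b) and (c) are essentially verifications. For (a), start from the structure of the center. By Subsection~\ref{s3.3}, since $Z({\mathfrak g})$ is abelian of rank one, there are coordinates in which $Z({\mathfrak g}) = \langle \partial_z,\, h_1(x,y)\partial_z,\,\dots\rangle$. Since $\dim Z({\mathfrak g})\ge 2$, there is a nonconstant invariant $h(x,y)$ with $h\partial_z\in Z({\mathfrak g})$.

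Every $v\in{\mathfrak g}$ commutes with $\partial_z$, so its coefficients do not depend on $z$:
$$v \,=\, a(x,y)\partial_x+b(x,y)\partial_y+c(x,y)\partial_z .$$
The condition $[v,\,h\partial_z]=0$ gives $v(h)=0$. Hence the common level sets of the $h_i$ form a foliation of the $(x,y)$-plane, and every element of ${\mathfrak g}$ is tangent to it. I would straighten this foliation by choosing the coordinate $y$ to be a function of $h$, so that all $h_i$ depend on $y$ alone. Then $b\equiv 0$, and every element has the form $a(x,y)\partial_x+c(x,y)\partial_z$.

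The projection $v\mapsto a(x,y)\partial_x$ is a homomorphism, in the spirit of Lemma~\ref{lem1}. Its image is a nilpotent algebra of vector fields of the form $a(x,y)\partial_x$, with $y$ acting as a parameter. This image is nonzero: otherwise ${\mathfrak g}$ would consist of fields $c(x,y)\partial_z$ and would be abelian.

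Nilpotency in one variable (with $y$ as a parameter) forces all these fields to be, pointwise in $y$, multiples of a single field $a_0(x,y)\partial_x$ with coefficients $f(y)$. This is the analogue of the one-variable fact used in the proof of Theorem~\ref{thm1}. I would justify it by noting that the image is a finite-dimensional nilpotent algebra, and for each fixed $y$ nilpotent subalgebras of $V({\mathbb R})$ are one-dimensional. Consequently $a=f(y)a_0(x,y)$.

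Next, change $x$ to $\int^x dt/a_0(t,y)$. This turns $a_0\partial_x$ into $\partial_x$, and since the change leaves $y$ and $z$ fixed, $\partial_z$ is preserved and $b=0$ persists. Every element now has the form $f(y)\partial_x+c(x,y)\partial_z$. Some element has $f\neq0$, and I want $\partial_x$ itself to lie in ${\mathfrak g}$. This is the main obstacle, since a priori one only knows that some $f_0(y)\partial_x+c_0\partial_z$ lies in ${\mathfrak g}$.

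To handle it, I would rescale $x\mapsto x/f_0(y)$ (with $f_0\ne0$ locally), which turns the $\partial_x$-part into $\partial_x$. I would then remove $c_0$ by $z\mapsto z-\int^x c_0(t,y)\,dt$, exactly as in the straightening step of Theorem~\ref{thm1}. Both changes preserve $\partial_z$ and preserve the form $f(y)\partial_x+g(x,y)\partial_z$. One must check that the rescaling keeps all other $\partial_x$-components functions of $y$ only, which holds because $x\mapsto x/f_0(y)$ sends $f(y)\partial_x$ to $(f/f_0)(y)\partial_x$.

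For (b), bracket the generators. Brackets of $f_i(y)\partial_x$ with $g(x,y)\partial_z$ give $f_i\,\partial_x g\,\partial_z$, which lowers the $x$-degree of a polynomial in $x$. Any two $f_i\partial_x$ commute, and all $\partial_z$-fields commute. Hence the span of all $x$-derivatives multiplied by products of the $f_i$ is finite-dimensional and the lower central series terminates, by Corollary~\ref{cor1} applied to the projection $\Pi$. Nonabelianness and $\dim Z\ge2$ come from an element of positive $x$-degree: its top iterated $x$-derivative gives a central element $g(y)\partial_z$ independent of $\partial_z$, and this element is nonzero because it arises from bracketing with $\partial_x$.

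For (c), a central element $f(y)\partial_x+g\partial_z$ must commute with $\partial_x$, giving $\partial_x g=0$. It must also commute with the lifted $f_i\partial_x+\dots$; since these include elements with positive $x$-degree $\partial_z$-part, this forces $f=0$, so the central element is $g(y)\partial_z$.
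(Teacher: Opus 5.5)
Your part (a) is sound and follows the paper's route. The two independent central fields $\partial_z,\,h\partial_z$ remove the $\partial_y$-components. The rank-one nilpotent projection is fibrewise one-dimensional and straightens to $\langle f(y)\partial_x\rangle$. A shift of $z$ then puts $\partial_x$ in $\mathfrak g$. Your extra rescaling $x\mapsto x/f_0(y)$ is harmless, and unnecessary if the field you straighten already lies in $\Pi(\mathfrak g)$.

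The genuine gap is in (b): you claim that the top iterated $x$-derivative of a kernel generator of positive $x$-degree gives a central element independent of $\partial_z$. If $g=\phi_d(y)x^d+\cdots$, then $\mathrm{ad}(\partial_x)^d(g\,\partial_z)=d!\,\phi_d(y)\,\partial_z$. This is a multiple of $\partial_z$ whenever $\phi_d$ is constant; you have only shown that it is nonzero. The step cannot be patched, because the clause ``$\dim Z(\mathfrak g)\ge 2$'' is false as stated. Take $m=1$, $f_1=1$ (lift $\partial_x$), $n=1$, $g_1=x$. The generated algebra $\langle\partial_x,\,x\partial_z,\,\partial_z\rangle$ is the Heisenberg algebra with center $\langle\partial_z\rangle$; it is exactly the algebra of Theorem~\ref{thm3}(a) with $h=0$. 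What is true, and what the paper's proof of (c) actually records, is that for nonabelian $\mathfrak g$ one has $Z(\mathfrak g)=\{g(y)\partial_z\in\mathfrak g\}$. This is the null space of $\mathrm{ad}(\partial_x)$ on $\ker(\Pi|_{\mathfrak g})$. Hence $\dim Z(\mathfrak g)\ge 2$ if and only if the generated algebra contains some $g(y)\partial_z$ with $g$ nonconstant. Examples are a nonconstant leading coefficient $\phi_d$, or $y$-dependence produced by a lift such as $y\partial_x$, as in Example~\ref{ex1}. The paper's proof never establishes the ``$\ge 2$'' clause either, so you should prove this corrected criterion instead.

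Two smaller repairs are needed.

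First, in your nilpotency count you treat the lifts as if they were the bare fields $f_i(y)\partial_x$. But two lifts $f_i\partial_x+G_i\partial_z$ and $f_j\partial_x+G_j\partial_z$ bracket to $(f_i\,\partial_xG_j-f_j\,\partial_xG_i)\partial_z$, which is generally nonzero; this coupling is what makes Example~\ref{ex1} non-split. You must therefore also require the $G_i$ to be polynomial in $x$. For instance, with $\partial_x$ and $y\partial_x+e^x\partial_z$ one gets $e^x\partial_z$, an eigenvector of $\mathrm{ad}(\partial_x)$, and nilpotency fails.

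Second, in (c) you force $f=0$ by bracketing with lifts whose $\partial_z$-part has positive $x$-degree. The hypothesis of (b), however, concerns the kernel generators, and the lifts may be pure $f_i(y)\partial_x$. Bracket instead with the generator of positive $x$-degree, $[f\partial_x+h\partial_z,\,g_i\partial_z]=f\,\partial_xg_i\,\partial_z$, as the paper does. More generally, you can bracket with any element of $\mathfrak g$ whose $\partial_z$-coefficient depends on $x$; such an element exists precisely because $\mathfrak g$ is nonabelian.
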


\begin{proof}
First of all, we note that any finite dimensional nilpotent nonabelian Lie algebra
of vector fields on ${\mathbb R}^2$, in suitable coordinates, is
$$
\langle \partial_{y}\rangle\ltimes \langle\partial_{x},\, y\partial_{x},\,\cdots
\, y^N\partial_{x}\rangle .
$$
with $N\, \geq\,1$ (see \cite{ABMS} or the tables of \cite{GKO}). Now, let
$$
{\mathfrak g}\ \subset\ V({\mathbb R}^3)
$$
be a finite dimensional nilpotent nonabelian Lie algebra whose center $Z({\mathfrak g})$
is of rank one and $\dim Z({\mathfrak g})\, \geq\, 2$, say
$$
\langle \partial_{z},\ F(x,y)\partial_{z}\rangle\,\ \subset\,\ Z({\mathfrak g})
$$
with $F$ being a nonconstant function, say $\frac{\partial F}{\partial y}\, \not=\, 0$.
Then we can choose coordinates
$$
\widetilde{x}\ =\ x,\ \ \widetilde{y}\ =\ F(x,y),\ \ \widetilde{z}\ =\ z
$$
such that
$$
\langle \partial_{\widetilde{z}}, \ \widetilde{y}\partial_{\widetilde{z}}\rangle\,\
\subset\,\ Z({\mathfrak g}).
$$
Thus, if 
$$
v\ =\ P(\widetilde{x},\widetilde{y})\partial_{\widetilde{x}} +
Q(\widetilde{x},\widetilde{y})\partial_{\widetilde{y}}+
R(\widetilde{x},\widetilde{y})\partial_{\widetilde{z}}\ \in\ {\mathfrak g},
$$
we see that the conditions
$$
[\partial_{\widetilde{z}},\, v]\ =\ 0\ =\ [\widetilde{y}\partial_{\widetilde{z}},\, v]
$$
imply that $v$ has got no terms in the direction of $\partial_{\widetilde{y}}$.

Therefore, by a change of notation, we may suppose that every element of $\mathfrak g$
is of the form $P(\widetilde{x},\widetilde{y})\partial_{\widetilde{x}} +
R(\widetilde{x},\widetilde{y})\partial_{\widetilde{z}}$.

Construct the projection map
\begin{equation}\label{e4}
\Pi\ :\ \mathfrak g\ \longrightarrow\ V({\mathbb R}^2),\ \
P(\widetilde{x},\widetilde{y})\partial_{\widetilde{x}} +
R(\widetilde{x},\widetilde{y})\partial_{\widetilde{z}}\ \longmapsto\
P(\widetilde{x},\widetilde{y})\partial_{\widetilde{x}}.
\end{equation}
The image $\Pi(\mathfrak g)$ is a rank one nilpotent Lie algebra of vector fields
in two variables. The only rank one algebras in $V({\mathbb R}^2)$ are abelian.
Therefore, by a change of coordinates that involves only $x$ and $y$ leaving
$z$ unchanged, we may assume that the image of $\Pi$ in \eqref{e4} is the following:
$$
\pi(\mathfrak g)\ =\ \langle \phi_1(\widetilde{y})\partial_{\widetilde{x}},\,
\cdots,\, \phi_N(\widetilde{y})\partial_{\widetilde{x}}\rangle
$$
with $\phi_1\,=\,1$ and $\partial_{z}\,=\, \partial_{\widetilde{z}}$.

Consequently, every element of $\mathfrak g$ is of the form
$$f(\widetilde{y})\partial_{\widetilde{x}}\,+\,
R(\widetilde{x},\widetilde{y})\partial_{\widetilde{z}},$$
and
$$
\partial_{\widetilde{x}}\,+\,
R_0(\widetilde{x},\widetilde{y})\partial_{\widetilde{z}}\ \in\ Z(\mathfrak g),\ \ \,
\partial_{\widetilde{z}} \in\ Z(\mathfrak g).
$$
Therefore, making a change of notation, we may assume that every element
of $\mathfrak g$ is of the form
$f(y)\partial_{x} + R(x,y)\partial_{z},$ and
$$
W\ =\ \partial_{x}\,+\, R_0(x,y)\partial_{z}\ \in\ Z(\mathfrak g),\ \ \,
\partial_{z} \in\ Z(\mathfrak g).
$$
As $\langle W,\, \partial_{z}\rangle$ is abelian of rank two, there are coordinates
for which $W\,=\, \partial_{\widetilde{x}}$, $\partial_{z}\,=\, \partial_{\widetilde{z}}$.
Such a change of coordinates is given by $\widetilde{x}\,=\, x$, $\widetilde{y}\,=\, y$,
$\widetilde{z}\,=\,z+\psi(x,y)$ with $\frac{\partial\psi}{\partial x}+R_0(x,y)\,=\, 0$.
Hence we may assume that every element in $\mathfrak g$ is of the form
$f(y)\partial_{x} + R(x,y)\partial_{z}$, and $\partial_{x},\, \partial_{z}\, \in\,
{\mathfrak g}$. This proves Statement (a) of the theorem.

To prove Statement (b), we want to check when is ${\rm ad}(f_0(y)\partial_{x} +
R_0(x,y)\partial_{z})$ nilpotent on $\mathfrak g$.

First of all,
$$
[f_0(y)\partial_{x} + R_0(x,y)\partial_{z},\, f(y)\partial_{x} + R(x,y)\partial_{z}]
\ =\ \left(f_0(y)\frac{\partial R}{\partial x}- f(y)\frac{\partial R_0}{\partial x}\right)
\partial_{z}
$$
and
$$
[f_0(y)\partial_{x} + R_0(x,y)\partial_{z},\, F(x,y)\partial_{z}]
\ =\ f_0(y)\frac{\partial F}{\partial x}\partial_{z}.
$$
Therefore, as $\partial_x\,\in\,\mathfrak g$, we see that $\mathfrak g$
is actually nilpotent (by Engel's theorem).

To prove Statement (c), suppose $f(y)\partial_{x} + h(x,y)\partial_{z}\,\in\,
Z({\mathfrak g})$. There must be an element $k(x,y)\partial_{z}$ in $\text{kernel}(\Pi)$
with $\frac{\partial k}{\partial x}\, \not=\, 0$, otherwise $\mathfrak g$ would
be abelian.

Therefore, $f$ must be $0$. Also, as $\partial_{x}\, \in\, \mathfrak g$, we must have
$\frac{\partial h}{\partial x}\,=\, 0$. Hence every element in $\mathfrak g$ of the
form $h(y)\partial_{z}$ actually lies in $Z({\mathfrak g})$. The null-space of
$\partial_{x}$ in $\text{kernel}(\Pi)$ gives the dimension of $Z({\mathfrak g})$.
\end{proof}

\begin{example}\label{ex1}
According to the construction in Theorem \ref{thm2}, to get a nilpotent algebra whose center
is positive dimensional, we should do the following:
\begin{enumerate}
\item[(1)] Take an algebra $\langle\partial_{x},\, f_1(y)\partial_{x},\, \cdots,\, f_N(y)\partial_{x}\rangle$,
take lifts $f_i(y)\partial_{x} + g_i(x,y)\partial_{z}$ where $g_i$ is a polynomial in $x$.

\item[(2)] Take finitely many elements $k_j(x,\,y)\partial_{z}$, where $k_j(x,\,y)$ is a polynomial
in $x$ and at least one of $k_j$ is of positive degree in $x$.
\end{enumerate}
Then these vector fields will generate a finite dimensional nilpotent Lie algebra of vector
fields whose rank is $1$
and dimension is greater than $1$. Thus the Lie algebra of vector fields generated by
$\partial_{x}$,\, $y\partial_{x} + x^2\exp (y)\partial_{z}$ and $x\partial_{z}$ must be nilpotent
with the dimension of its center being greater than $1$.

Indeed, this Lie algebra has the following basis:
$$
e_1\ =\ \partial_{x},\ e_2\ =\ y\partial_{x}+x^2\exp({y})\partial_{z},\
e_3\ =\ x\partial_{z},\ e_4\ =\ \partial_{z},
$$
$$
e_5\ =\ y\partial_{z},\ e_6\ =\ x\exp ({y})\partial_{z},\ e_7\ =\ \exp ({y})\partial_{z},\
e_8\ =\ y\exp ({y})\partial_{z}.
$$
This Lie algebra projects to the Lie algebra $\langle\partial_{x},\, y\partial_{x}\rangle$ and the kernel $K$
of the projection consists of the vector fields in the domain Lie algebra that are supported by $\partial_{z}$.
This algebra is not a split extension as any complementary subalgebra $A$ must be abelian
and a computation shows that if $\langle a,\, b\rangle$ is a basis of $A$ then $[a,\,b]$ is
actually a multiple of $e_6$.
\end{example}

\section{The dimension of the center is one}

Let ${\mathfrak g}$ be a non-abelian Lie algebra of vector fields on ${\mathbb R}^3$ whose center
in suitable coordinates $(x,\, y,\, z)$ is $\langle \partial_z \rangle$. Consequently, every element
of ${\mathfrak g}$ is of the form
$$
P(x,\,y)\partial_x\, +\, Q(x,\,y)\partial_y\, + \,R(x,\,y)\partial_z.
$$

Let ${\widetilde g}$ be the Lie algebra of all vector fields on ${\mathbb R}^3$ of the form 
$$
P(x,\,y)\partial_x\, +\, Q(x,\,y)\partial_y\, + \,R(x,\,y)\partial_z.
$$
By Lemma \ref{lem1}, the map
\begin{equation}\label{p2}
\Pi \ :\ \widetilde{\mathfrak g} \ \longrightarrow\ V({\mathbb R}^2)
\end{equation}
defined by
$$
\Pi(P(x,\,y)\partial_x\, +\, Q(x,\,y)\partial_y\, +\, R(x,\,y)\partial_z)\ =\ P(x,\,y)\partial_x\, +\, Q(x,\,y)\partial_y.
$$
is a homomorphism of Lie algebras.

Thus the image $\Pi({\mathfrak g})$ is one of the following types:
\begin{itemize}
\item $\langle \partial_x,\, f_1(y)\partial_x,\, \cdots,\, f_N(y)\partial_x \rangle$
\item $\langle \partial_x,\, \partial_y \rangle$
\item $\langle \partial_x,\, \partial_y,\, y\partial_x,\, \cdots,\, y^N\partial_x \rangle$.
\end{itemize}
The algebra $\mathfrak g$ is generated by lifts of basis vectors of $\Pi({\mathfrak g})$ and
finitely many vector fields of the form $g_i(x,\,y)\partial_z$,\, $1 \,\leq\, i \,\leq\, n$. The requirement that the center
$Z({\mathfrak g})$ has dimension $1$ puts strong restrictions on
${\rm kernel}(\Pi\big\vert_{\mathfrak g})$. Moreover, one can always choose coordinates so that all vector fields in
${\mathfrak g}$ are polynomials in $x$.

\begin{theorem}\label{thm3}\mbox{}
\begin{enumerate}
\item[(a)] If $\Pi({\mathfrak g})$ (defined in \eqref{p2}) is of rank $1$ and ${\rm kernel}(\Pi\big\vert_{\mathfrak g})$
has no terms that are of positive degree in $x$, then ${\mathfrak g}$ is the $3$-dimensional Heisenberg algebra:
$$
{\mathfrak g}\,\ =\,\ \langle \partial_x,\, h(y)\partial_x + x\partial_z,\, \partial_z \rangle.
$$

\item[(b)] If $\Pi({\mathfrak g})$ is of rank $1$ and ${\rm kernel}(\Pi\big\vert_{\mathfrak g})$ has a term of positive degree
in $x$, then $\partial_x$ has only one Jordan block in ${\rm kernel}(\Pi\big\vert_{\mathfrak g})$, and
$$
{\mathfrak g}\ =\ \langle \partial_x \rangle \ltimes
\langle x^N + \phi_1(y)x^{N-1} + \cdots + \phi_N(y) \rangle \partial_z .
$$

\item[(c)] If $\Pi({\mathfrak g})$ is non-abelian, then
$$
{\mathfrak g}\ =
$$
$$
\langle \partial_x,\, \partial_y + f_0(x,y)\partial_z,\,
y\partial_x + f_1(x,y)\partial_z,\, \cdots,\,
y^N\partial_x + f_N(x,y)\partial_z,\,
g_1(x,y)\partial_z,\, \cdots,\, g_M(x,y)\partial_z \rangle ,
$$
where $f_i$ and $g_j$ are all polynomials in $x$ and $y$.

The null space of $\partial_x$
on ${\rm kernel}(\Pi\big\vert_{\mathfrak g})$ is $\langle \{y^n\partial_z\}_{n=1}^M\rangle$ for some $M$.
Moreover, the Jordan chains for $\partial_x$ can terminate in arbitrary linearly independent polynomials 
whose span is
$$
\langle 1, \ y,\ y^2,\ \cdots,\ y^M\rangle .
$$

\item[(d)] If $\Pi({\mathfrak g})$ is abelian of rank $2$, then
$$
{\mathfrak g}\ =\ \langle \partial_x,\, \partial_y + P_0(x,y)\partial_z,\,
P_1(x,y)\partial_z,\, \cdots,\, P_N(x,y)\partial_z \rangle ,
$$
where $P_i$,\, $0 \,\leq\, i \,\leq\, N$, are polynomials.

If there are $i,\, j$ (they are
allowed to coincide) such that the polynomial $P_i$ (respectively, $P_j$)
has positive degree in $x$ (respectively, $y$), then
$Z({\mathfrak g})\, =\, \langle \partial_z \rangle$. Otherwise rank $Z({\mathfrak g}) \,=\, 2$,
and this is covered by Section~3.
\end{enumerate}
\end{theorem}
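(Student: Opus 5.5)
We work throughout in the coordinates fixed before the statement: $Z({\mathfrak g})=\langle\partial_z\rangle$, every element of $\mathfrak g$ is $P\partial_x+Q\partial_y+R\partial_z$ with coefficients independent of $z$, and $\Pi$ is as in \eqref{p2}. By Corollary~\ref{cor1}, $\mathfrak g$ is spanned by lifts of a basis of $\Pi({\mathfrak g})$ together with the abelian ideal $K={\rm kernel}(\Pi\big\vert_{\mathfrak g})$. The algebra $\mathfrak g$ is nilpotent if and only if $\Pi({\mathfrak g})$ is nilpotent and every lift acts nilpotently on $K$. The image $\Pi({\mathfrak g})$ is a nilpotent algebra of vector fields in two variables. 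By the rank one/rank two dichotomy used in the proof of Theorem~\ref{thm2}, and by the normal form $\langle\partial_y\rangle\ltimes\langle\partial_x,\dots,y^N\partial_x\rangle$, after a change of coordinates in $(x,y)$ alone it is one of the three listed types. The coordinate change $\widetilde z=z+\psi(x,y)$ then straightens the lift of $\partial_x$, and in case (c) also the lift of $\partial_y$ as far as possible, as in the proof of Theorem~\ref{thm2}(a). This gives $\partial_x\in\mathfrak g$.

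Nilpotency of ${\rm ad}(\partial_x)$ on $K$ forces every $g(x,y)\partial_z\in K$ to satisfy $\partial_x^k g=0$, so $g$ is a polynomial in $x$. In case (c), nilpotency of ${\rm ad}(\partial_y+f_0\partial_z)$ forces $g$ to be polynomial in $y$ as well. The lifts $y^i\partial_x+f_i\partial_z$ have brackets lying in $K$, so the $f_i$ are polynomials modulo $K$.

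The central step is the following computation. An element $f(y)\partial_x+h\partial_z$ is central iff it commutes with $\partial_x$, with the other lifts, and with $K$. Brackets with $g\partial_z\in K$ give $f\,\partial_x g$, so $f=0$ as soon as $K$ contains some $g$ of positive $x$-degree. Centrality then reduces to $h=h(y)$ together with commuting with the lifts. Hence $\dim Z({\mathfrak g})=1$ forces $K\cap\{h(y)\partial_z\}$, together with the central lifts, to be just $\langle\partial_z\rangle$.

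For (a), rank one and $K\subset\{h(y)\partial_z\}$ give $K\subset Z$, so $K=\langle\partial_z\rangle$. Nonabelianness forces some lift $f(y)\partial_x+R\partial_z$ with $\partial_x R\neq0$ and $\partial_x^2R\in K$. Normalizing, this lift is $h(y)\partial_x+x\partial_z$. Any further lift $f_2(y)\partial_x+R_2\partial_z$ would produce either a second central element or a nonconstant $h(y)\partial_z$ in $K$. Both contradict $\dim Z=1$, which yields the Heisenberg algebra.

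For (b), the kernel of ${\rm ad}(\partial_x)$ on $K$ consists of functions of $y$ and is central. It therefore equals $\langle\partial_z\rangle$, so there is a single Jordan block, generated by $(x^N+\phi_1(y)x^{N-1}+\cdots+\phi_N(y))\partial_z$. Lifts of further $f_i(y)\partial_x$ are then excluded, since $f_i(y)\partial_x-(\text{suitable }K\text{-correction})$ would be central.

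For (d), $K$ is polynomial in both variables, and the stated criterion for the center follows from the centrality computation above. Case (c) is bookkeeping on Jordan chains of $\partial_x$ whose terminal vectors lie in $\ker{\rm ad}(\partial_x)=\{h(y)\partial_z\}$, which is $\partial_y$-stable, hence a span $\langle1,y,\dots,y^M\rangle$.

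The main obstacle I expect is (c): showing that the terminal space is exactly a full span $\langle 1,\dots,y^M\rangle\partial_z$, and controlling which $f_i$ can occur. I would handle it by using ${\rm ad}(\partial_y+f_0\partial_z)$-invariance of $\ker{\rm ad}(\partial_x)\cap K$ together with nilpotency, which forces a finite-dimensional $\partial_y$-stable space of functions of $y$, hence polynomials closed under differentiation.
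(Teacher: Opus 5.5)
\textbf{Parts (a)--(c).} Your outline matches the paper's argument. You make $\partial_x\in\mathfrak g$ by $\widetilde z=z+\psi(x,y)$. You use that in the rank-one cases every $h(y)\partial_z\in K$ is central, so $K\cap\{h(y)\partial_z\}=\langle\partial_z\rangle$. You count Jordan blocks of $\operatorname{ad}\partial_x$ on $K$ by its null space. In (c) you use that this null space is stable under $\operatorname{ad}(\partial_y+f_0\partial_z)$, i.e.\ under $d/dy$.

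One step in (b) is argued wrongly, though it is easily repaired. You exclude further lifts $f_i(y)\partial_x+R_i\partial_z$ because ``$f_i(y)\partial_x$ minus a $K$-correction would be central''. But your own centrality computation shows that nothing with a nonzero $\partial_x$-component is central once $K$ contains an element of positive $x$-degree. What works, and is what the paper does after $\widetilde x=x+\Psi(y)$, is this. Take the member of $x$-degree one of the Jordan chain, which up to a nonzero constant is $(x+\Psi(y))\partial_z$, and compute
\[
[(x+\Psi(y))\partial_z,\ f_i(y)\partial_x+R_i\partial_z]\ =\ -f_i(y)\partial_z\ \in\ K\cap\{h(y)\partial_z\}\ =\ \langle\partial_z\rangle .
\]
So $f_i$ is constant and $\Pi(\mathfrak g)=\langle\partial_x\rangle$.

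Similarly, in (a) what you need is $\partial_xR\,\partial_z=[\partial_x,\,f\partial_x+R\partial_z]\in K=\langle\partial_z\rangle$, i.e.\ $\partial_xR$ constant. That, not $\partial_x^2R\in K$, is what permits the normalization to $h(y)\partial_x+x\partial_z$.

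\textbf{Part (d).} This is the genuine gap. The center criterion does not follow from your centrality computation, and it cannot be proved, because it is false as stated. Your computation only treats candidates $f(y)\partial_x+h\,\partial_z$. When $\Pi(\mathfrak g)=\langle\partial_x,\partial_y\rangle$, a candidate is $a\partial_x+b(\partial_y+P_0\partial_z)+g\,\partial_z$ with $g\partial_z\in K$. It is central iff $(a\partial_x+b\partial_y)P=0$ for all $P\partial_z\in K$, $\partial_xg=-b\,\partial_xP_0$, and $\partial_yg=a\,\partial_xP_0$.

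Positive $x$- and $y$-degrees among the $P_i$ do not force $a=b=0$. For example, $\langle\partial_x,\,\partial_y,\,(x+y)\partial_z,\,\partial_z\rangle$ has $P_1=x+y$ of positive degree in both variables, yet its center is $\langle\partial_x-\partial_y,\,\partial_z\rangle$. In the other direction, the Heisenberg algebra $\langle\partial_x,\,\partial_y+x\partial_z,\,\partial_z\rangle$, written with $P_0=x$, $P_1=1$, falls under ``otherwise'', yet its center $\langle\partial_z\rangle$ has rank one. Moreover, replacing $z$ by $z-y^2/2$ turns $P_0$ into $x-y$, so the criterion is not even coordinate-independent.

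The paper omits the proof of (d) (``similar to (c)''). Only the normal form in its first sentence goes through as in (c): $K$ is polynomial by nilpotency, and the $y$-only part $c(y)$ of $P_0$ is removed by $\widetilde z=z-\int c(y)\,dy$. The dichotomy itself must be replaced by the linear conditions above.
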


\begin{proof}[{Proof of statement (a)}]
Let $\Pi({\mathfrak g})\, =\, \langle f_1(y)\partial_x,\, \cdots,\, f_N(y)\partial_x \rangle$
(see \eqref{p2}) with $f_1(y) \,=\, 1$. Thus ${\mathfrak g}$ is generated by
$$
\partial_x + K_1(x,y)\partial_z, \ \cdots, \ f_N(y)\partial_x + K_N(x,y)\partial_z,
$$
where the vector fields $K_i(x,y)\partial_z$ are not necessarily in ${\mathfrak g}$. We are assuming
that $Z({\mathfrak g})\, =\, \langle \partial_z \rangle$. Therefore, as $\partial_x + K_1(x,y)\partial_z$
and $\partial_z$ are commuting vector fields of rank $2$, there are local coordinates
$\widetilde{x},\, \widetilde{y},\, \widetilde{z}$ such that
$$
\partial_x + K_1(x,y)\partial_z\ =\ \partial_{\widetilde{x}} \quad \text{and} \quad
\partial_z\ =\ \partial_{\widetilde{z}}.
$$
Such a system of coordinates is given by $\widetilde{x} \,=\, x$, $\widetilde{y} \,=\, y$,
$\widetilde{z} \,=\, z + \xi(x,\,y)$, where $\dfrac{\partial \xi}{\partial x} \,=\, K_1(x,y)$.
Hence by change of notation we may assume that $\partial_x,\, \partial_z$ are in ${\mathfrak g}$.

Assume that ${\rm kernel} (\Pi\big\vert_{\mathfrak g})$ (see \eqref{p2}) has no terms of positive degree in $x$. Let
$h(y)\partial_x + K(x,y)\partial_z$ be a generator of ${\mathfrak g}$. Thus
$\dfrac{\partial K}{\partial x}\partial_z$ is in ${\rm kernel}(\Pi\big\vert_{\mathfrak g})$ and
$\dfrac{\partial K}{\partial x} \,=\, \phi(y)$. But if $\phi(y)\partial_z$ is in
${\rm kernel}(\Pi\big\vert_{\mathfrak g})$, it must be in the center of ${\mathfrak g}$, and so $\phi$ must
be a constant. Hence the generator is $h(y)\partial_x + (cx + f(y))\partial_z$.

There must be a generator --- other than $\partial_x$ and $\partial_z$ --- with $c \,\neq\, 0$,
as ${\mathfrak g}$ is nonabelian. Hence we may assume that
$h_0(y)\partial_x + (x + f_0(y))\partial_z$ is a generator of ${\mathfrak g}$. By the
substitutions $\widetilde{x} \,=\, x + f_0(y)$, $\widetilde{y} \,=\, y$, $\widetilde{z} \,=\, z$, we may assume
that $X \,=\, h_0(y)\partial_x + x\partial_z$ is in ${\mathfrak g}$.

Take any other generator $Y \,=\, h(y)\partial_x + (kx + f(y))\partial_z$. Now
$$
[X,\, Y]\,\ =\,\  (kx + f(y))\partial_z.
$$
Therefore, $f(y)\partial_z$ is in ${\mathfrak g}$, and so $f$ must be a constant. Hence
${\mathfrak g}\, =\, \langle \partial_x,\, \partial_z,\, h_0(y)\partial_x + x\partial_z \rangle$.
This proves statement (a).

\textbf{Proof of statement (b).}\, Assume that ${\rm kernel}(\Pi\big\vert_{\mathfrak g})$ has an element of the form
$(\phi_N(y)x^N + \ldots + \phi_0(y))\partial_z$ with $N \,>\, 0$. Thus
$(\phi(y)x + \Psi(y))\partial_z$ is in ${\mathfrak g}$, and therefore $\phi(y)\partial_z$
is in ${\mathfrak g}$. As all elements of this type are in the center of ${\mathfrak g}$,
we see that $\phi$ must be a constant. Hence $(x + \Psi(y))\partial_z$ is in ${\mathfrak g}$.

By the change of variables $\widetilde{x} \,=\, x + \Psi(y)$, $\widetilde{y} \,=\, y$, $\widetilde{z}\, =\, z$,
we see that $x\partial_z$ is in ${\mathfrak g}$.

Therefore if $h(y)\partial_x + k(x,y)\partial_z$ is in ${\mathfrak g}$, then
$[x\partial_z,\, h(y)\partial_x + k(x,y)\partial_z]\, =\, -h(y)\partial_z$ gives that $h(y)$ is actually a
constant. Therefore $\Pi({\mathfrak g}) \,=\, \langle \partial_x \rangle$ and ${\mathfrak g}$ is
generated by
$$
\partial_x + k_0(x,y)\partial_z \quad \text{and elements of the form }\, k(x,y)\partial_z
\text{ which are in }\,\, {\rm kernel}(\Pi\big\vert_{\mathfrak g}).
$$
Using that $\partial_z$ and $\partial_x + k_0(x,y)\partial_z$ are commuting vector fields of rank $2$,
we may assume that $\partial_x$ is in ${\mathfrak g}$ and $Z({\mathfrak g}) \,=\, \langle \partial_z \rangle$.

Now $\partial_x$ operates on ${\rm kernel}(\Pi\big\vert_{\mathfrak g})$ as a nilpotent transformation and the
null space of $\partial_x$ in ${\rm kernel}(\Pi\big\vert_{\mathfrak g})$ consists of all vector fields of the form
$p(y)\partial_z$. As $p(y)\partial_z$ is in the center of ${\mathfrak g}$, we see that there
is only one Jordan block. Thus
$$
{\mathfrak g}\ =\ \langle \partial_x \rangle \ltimes
\langle (x^N\, +\, \phi_1(y)x^{N-1}\, +\, \ldots \,+\, \phi_N(y))\partial_z \rangle.
$$
This proves statement (b).

\textbf{Proof of statement (c).}\, Assume that $\Pi({\mathfrak g})$ is non-abelian. Thus
$$
\Pi({\mathfrak g}) = \langle \partial_x, \partial_y, y\partial_x, \ldots, y^N\partial_x \rangle,
\quad N \geq 1,
$$
and ${\mathfrak g}$ is generated by
$$
y^i\partial_x + F_i(x,y)\partial_z,\ 0 \leq i \leq N, \quad
\partial_y + G_0(x,y)\partial_z, \quad
G_j(x,y)\partial_z,\ 1 \leq j \leq M.
$$
Arguing as before, we may assume that $\langle \partial_x, \partial_z \rangle \subset {\mathfrak g}$.

Now on ${\rm kernel}(\Pi\big\vert_{\mathfrak g})$, the quotient ${\mathfrak g}/{\rm kernel}(\Pi\big\vert_{\mathfrak g})$
operates nilpotently. Hence ${\rm kernel}(\Pi\big\vert_{\mathfrak g})$ consists of polynomial vector fields $P(x,\,y)\partial_z$.

Moreover
$$
[\partial_x,\ y^i\partial_x + F_i\partial_z] = \frac{\partial F_i}{\partial x}\partial_z,
\qquad
[\partial_x,\ \partial_y + G_0(x,y)\partial_z] = \frac{\partial G_0}{\partial x}\partial_z.
$$
Hence $F_i$ and $G_0$ are also polynomials, as $\partial_y + G_0(x,y)\partial_z$ operates
nilpotently on ${\rm kernel}(\Pi\big\vert_{\mathfrak g})$.

Moreover the null space of $\partial_x$ in ${\rm kernel}(\Pi\big\vert_{\mathfrak g})$ consists of polynomial
fields of the form $P(y)\partial_z$ and $\partial_y + G_0(x,y)\partial_z$ operates on the
null space of $\partial_x$ in ${\rm kernel}(\Pi\big\vert_{\mathfrak g})$.

Hence the null space of $\partial_x$ in ${\rm kernel}(\Pi\big\vert_{\mathfrak g})$ is
$\langle y^n\partial_z\ \big\vert\, \ 0 \,\leq \,n \,\leq\, M \rangle$. This proves statement (c).

The proof of statement (d) is similar to statement (c) and it is omitted.
\end{proof}

The following example shows that the algebras of type (c) in Theorem~\ref{thm3} need not split.

\begin{example}\label{ex3}
Let ${\mathfrak g}$ be the Lie algebra generated by
$$
\partial_x,\ \ y\partial_x, \ \ \partial_y + (x^2+y^2)\partial_z, \ \ (x+y)\partial_z.
$$
This algebra is not a split extension of ${\rm kernel}(\Pi\big\vert_{\mathfrak g})$.

A basis of this algebra is:
$$
e_1 \,=\, \partial_x,\ \ e_2 \,=\, y\partial_x,\ \ e_3 \,=\, \partial_y+(x^2+y^2)\partial_z,
\ \ e_4 \,=\, \partial_z,
$$
$$
e_5\, =\, x\partial_z,\ \ e_6\, =\, y\partial_z,\ \ e_7\, =\, xy\partial_z,\ \ e_8 \,=\, y^2\partial_z.
$$
To prove this using contradiction, assume that a complementary subalgebra $S$ exists, with basis
$$
a\, =\, e_1 + k_1, \ \ b \,=\, e_2 + k_2, \ \ c \,=\, e_3 + k_3,
$$
where $k_1,\, k_2,\, k_3 \,\in\, {\rm kernel}(\Pi\big\vert_{\mathfrak g})$. Write
$k_1\, =\, (a_4 + a_5 x + a_6 y + a_7 xy + a_8 y^2)\partial_z$.
For $S$ to be a subalgebra we need that $[a,\,c] \,= \,0$ and $[b,\,c]\, =\, -a$.
The condition $[a,\,c] \,= \,0$ gives, considering the coefficient of $x\partial_z$:
$$
2 - a_7\ =\ 0,
$$
so
$$a_7\ =\ 2.$$
The condition $[b,\,c] =\, -a$ gives, considering the coefficient of $xy\partial_z$:
$$
a_7 + 2\ =\ 0 
$$
so
$$
a_7\ =\ -2.
$$
These two conditions are contradictory. Hence no complementary subalgebra exists
and ${\mathfrak g}$ is not a split extension of ${\rm kernel}(\Pi\big\vert_{\mathfrak g})$.
\end{example}

\section{Codes and Examples}

The constructive procedures of Sections~4--6 are algorithmic. From a finite
set of generating vector fields one forms iterated Lie brackets until the
linear span closes; the resulting finite dimensional algebra can then be
examined for its nilpotency, its centre, its rank, and for the finer
structural questions that separate one normal form from another. This section
records an exact implementation of these steps, together with builders for the
three types of the paper, and applies it to the examples.

All computations are exact. A vector field
$P(x,y,z)\,\partial_x+Q(x,y,z)\,\partial_y+R(x,y,z)\,\partial_z$ is stored as the
triple $(P,Q,R)$ of its components. Linear independence and membership are
decided by matching the coefficients of the distinct coordinate monomials---and
of the functions such as $e^{y}$ that actually occur---these being linearly
independent over $\mathbb{R}$. No numerical evaluation and no random sampling
enter, so every assertion the code prints is a proof rather than evidence. In
particular the split test returns \emph{either} an explicit complementary
subalgebra \emph{or} an inconsistent system of linear equations that no
complement can satisfy.

The routines are the following. Given a list of generators, \texttt{lie\_closure}
returns a basis of the Lie algebra they generate. On such a basis,
\texttt{lower\_central\_series} returns the dimensions of the descending central
series (hence decides nilpotency and returns the nilpotency class),
\texttt{rank} returns the rank in the sense of Definition~\ref{de1},
\texttt{centre} returns a basis of the centre, \texttt{projection\_kernel}
returns a basis of the kernel of the projection that drops the last component,
\texttt{split\_test} decides whether the algebra splits over that kernel, and
\texttt{quotient\_centre} returns the centre of a quotient $\mathfrak g/I$ by a
given ideal. The three builders \texttt{build\_rank2\_centre},
\texttt{build\_rank1\_centre} and \texttt{build\_centre\_dim1} assemble a
generating set from the theorem-specific data of Sections~4,~5 and~6
respectively and return the algebra it generates.

\emph{Reproducibility.} The module \texttt{vf\_lie.py} is self-contained and
depends only on SymPy; it was run under Python~3.12 and SymPy~1.14, and uses no
other libraries. Each example below is reproduced by pasting the accompanying
few lines into a Python session in which \texttt{vf\_lie.py} is on the path
(\texttt{from vf\_lie import *}); the values shown in the comments are the exact
printed output. Running the module end to end reproduces the bases, centres,
ranks, nilpotency classes and split verdicts stated in Sections~5 and~6.

The code was developed with the assistance of an AI system (Anthropic's Claude)
and has been checked and verified by the authors, who take full responsibility
for its correctness. Every assertion the code reports is certified within the
computation itself, rather than accepted on trust: nilpotency by an explicit
lower central series, splitting or its failure by exhibiting either a
complementary subalgebra or an inconsistent linear system, and each claimed
identity by direct symbolic evaluation.

\subsection{The module \texttt{vf\_lie.py}}

{\scriptsize\verbatiminput{vf_lie.py}}

\subsection{Centre of rank two: two inequivalent algebras}

We use \texttt{build\_rank2\_centre} to produce two algebras of the type of
Section~4. The data are the tops of the two Jordan chains of
$\operatorname{ad}(\partial_z)$; feeding $z^{\,m-1}\partial_x$ produces a Jordan
block of size $m$ in the $\partial_x$--direction, and likewise for $\partial_y$.

The first algebra, from chain-tops $z^{3}\partial_x$ and $\partial_y$
(Jordan blocks of sizes $4$ and $1$), is
\[
G_1=\big\langle\,\partial_z,\ \partial_x,\ z\partial_x,\ z^{2}\partial_x,\
z^{3}\partial_x,\ \partial_y\,\big\rangle .
\]
The second, from chain-tops $z^{2}\partial_x$ and $z\partial_y$
(Jordan blocks of sizes $3$ and $2$), is
\[
G_2=\big\langle\,\partial_z,\ \partial_x,\ z\partial_x,\ z^{2}\partial_x,\
\partial_y,\ z\partial_y\,\big\rangle .
\]
Both are nilpotent, both are six--dimensional, and in each the centre is
$\langle\partial_x,\partial_y\rangle$, of dimension two and rank two; so both are
of the normal form of Theorem~4.1. Nevertheless they are inequivalent. The code
computes
\[
\dim Z\!\big(G_1/Z(G_1)\big)=1,
\qquad
\dim Z\!\big(G_2/Z(G_2)\big)=2 .
\]
A point transformation induces, by the differential, an isomorphism of the
corresponding Lie algebras (Section~2); an isomorphism carries the centre to the
centre and hence induces an isomorphism of the quotients by the centre, which in
turn carries centre to centre. The dimension of $Z(\mathfrak g/Z(\mathfrak g))$
is therefore a point-equivalence invariant. Since it is $1$ for $G_1$ and $2$
for $G_2$, no change of coordinates can map one of these algebras onto the
other; indeed they are not even isomorphic as abstract Lie algebras. (They are
separated here also by their nilpotency class, $4$ against $3$; with exactly two
Jordan blocks one cannot match dimension and class without matching the block
sizes, so some coarser invariant must always accompany the difference in
$\dim Z(\mathfrak g/Z(\mathfrak g))$.)

{\footnotesize\begin{verbatim}
from vf_lie import *
x, y, z = coordinates("x y z"); C = (x, y, z)

# Jordan blocks of sizes 4 and 1  (tops  z^3 d_x  and  d_y)
G1 = build_rank2_centre([(z**3, 0, 0), (0, 1, 0)], C)
Z1 = centre(G1, C)
print(len(G1), len(Z1), rank(Z1, C))          # 6  2  2   (centre of rank 2)
print(quotient_centre(G1, Z1, C)[0])           # 1

# Jordan blocks of sizes 3 and 2  (tops  z^2 d_x  and  z d_y)
G2 = build_rank2_centre([(z**2, 0, 0), (0, z, 0)], C)
Z2 = centre(G2, C)
print(len(G2), len(Z2), rank(Z2, C))          # 6  2  2   (centre of rank 2)
print(quotient_centre(G2, Z2, C)[0])           # 2
\end{verbatim}}

\subsection{Centre of rank one (Example~\ref{ex1})}

Feeding \texttt{build\_rank1\_centre} the lift $y\partial_x+x^{2}e^{y}\partial_z$
and the kernel field $x\partial_z$ reproduces the algebra of Example~\ref{ex1}, with
basis $e_1,\dots,e_8$. The computation confirms that the algebra is nilpotent of
class $3$, with lower central series of dimensions $8,5,2,0$; that its rank is
$2$, while its centre is of rank one and dimension four, spanned by
\[
\partial_z,\quad y\,\partial_z,\quad e^{y}\,\partial_z,\quad y\,e^{y}\,\partial_z,
\]
each of the form $g(y)\,\partial_z$, in agreement with Theorem~\ref{thm2}(c). Finally,
\texttt{split\_test} reports that the extension does \emph{not} split, which is
the assertion made in Example~\ref{ex1}.

{\footnotesize\begin{verbatim}
from vf_lie import *
x, y, z = coordinates("x y z"); C = (x, y, z)
g = build_rank1_centre(lifts=[(y, x**2*exp(y))], kernels=[x], coords=C)
print(len(g))                        # 8
print(lower_central_series(g, C))    # ([8, 5, 2, 0], True, 3)
print(rank(g, C))                    # 2
Z = centre(g, C); print(len(Z), rank(Z, C))   # 4  1
print(split_test(g, C)["split"])     # False
\end{verbatim}}

To exhibit the construction with a larger centre, feed the same builder the lift
$y\partial_x+x^{2}\partial_z$ and the two kernel fields $x\partial_z$ and
$xy\partial_z$. The resulting algebra
\[
\mathfrak g=\big\langle\,\partial_x,\ \partial_z,\
y\partial_x+x^{2}\partial_z,\ x\partial_z,\ xy\partial_z,\ y\partial_z,\
y^{2}\partial_z\,\big\rangle
\]
is seven--dimensional, nilpotent of class $3$, of rank $2$, and its centre
\[
\big\langle\,\partial_z,\ y\partial_z,\ y^{2}\partial_z\,\big\rangle
\]
is of rank one and dimension three, each element again of the form
$g(y)\,\partial_z$ as in Theorem~\ref{thm2}(c). The extension does not split.

{\footnotesize\begin{verbatim}
from vf_lie import *
x, y, z = coordinates("x y z"); C = (x, y, z)
g = build_rank1_centre(lifts=[(y, x**2)], kernels=[x, x*y], coords=C)
print(len(g))                        # 7
print(lower_central_series(g, C))    # ([7, 4, 2, 0], True, 3)
print(rank(g, C))                    # 2
Z = centre(g, C); print(len(Z), rank(Z, C))   # 3  1
print(split_test(g, C)["split"])     # False
\end{verbatim}}

\subsection{Centre of dimension one (Example~\ref{ex3})}

Feeding \texttt{build\_centre\_dim1} the lifts $\partial_x$, $y\partial_x$,
$\partial_y+(x^{2}+y^{2})\partial_z$ of a basis of $\Pi(\mathfrak g)$ together
with the kernel field $(x+y)\partial_z$ reproduces the algebra of Example~\ref{ex3},
with basis $e_1,\dots,e_8$. The computation confirms that the algebra is
nilpotent of class $5$, with lower central series of dimensions $8,5,4,2,1,0$;
that its rank is $3$; that its centre is $\langle\partial_z\rangle$, of dimension
one; and that $\ker(\Pi\big|_{\mathfrak g})$ has dimension five. The split test
reports that the extension does not split.

{\footnotesize\begin{verbatim}
from vf_lie import *
x, y, z = coordinates("x y z"); C = (x, y, z)
g = build_centre_dim1(lifts=[(1, 0, 0), (y, 0, 0), (0, 1, x**2 + y**2)],
                      kernels=[x + y], coords=C)
print(len(g))                        # 8
print(lower_central_series(g, C))    # ([8, 5, 4, 2, 1, 0], True, 5)
print(rank(g, C))                    # 3
Z = centre(g, C); print(len(Z), rank(Z, C))   # 1  1
print(len(projection_kernel(g, C)))  # 5
print(split_test(g, C)["split"])     # False
\end{verbatim}}

The non-split verdict reproduces, symbolically, the contradiction obtained by
hand in Example~\ref{ex3}. Writing a candidate complement
$a=e_1+k_1$, $b=e_2+k_2$, $c=e_3+k_3$ with $k_i\in\ker(\Pi\big|_{\mathfrak g})$
and imposing the two subalgebra conditions $[a,c]=0$ and $[b,c]=-a$, the
coefficient of $x$ in the $\partial_z$--part of $[a,c]$ and the coefficient of
$xy$ in the $\partial_z$--part of $[b,c]+a$ give, respectively,
\[
2-a_7=0 \qquad\text{and}\qquad a_7+2=0 ,
\]
so that $a_7=2$ and $a_7=-2$ would have to hold at once. The system is
inconsistent, and therefore no complement exists.

{\footnotesize\begin{verbatim}
from vf_lie import bracket, coordinates
from sympy import symbols, expand
x, y, z = coordinates("x y z"); C = (x, y, z)
a4,a5,a6,a7,a8, b6,b7,b8, c5,c7 = symbols('a4 a5 a6 a7 a8 b6 b7 b8 c5 c7')
P = a4 + a5*x + a6*y + a7*x*y + a8*y**2
R = b6*y + b7*x*y + b8*y**2
Q = c5*x + c7*x*y
a = (1, 0, P); b = (y, 0, R); c = (0, 1, (x**2 + y**2) + Q)
ac = expand(bracket(a, c, C)[2])
bc = expand(bracket(b, c, C)[2] + P)          # ([b,c] + a)_z
print(ac.coeff(x, 1).coeff(y, 0), '= 0')      # 2 - a7 = 0
print(bc.coeff(x*y), '= 0')                    # a7 + 2 = 0
\end{verbatim}}

\subsection{A split algebra of the same type}

Whether an algebra of centre dimension one splits over $\ker(\Pi)$ depends on
the lift, and not on the image $\Pi(\mathfrak g)$ alone. Example~\ref{ex3} is of type
(c) of Theorem~6.1 and does not split. Feeding \texttt{build\_centre\_dim1} the
\emph{same} non-abelian image $\langle\partial_x,\,y\partial_x,\,\partial_y\rangle$,
but lifted with no $\partial_z$--coupling, together with the kernel field
$(x+y)\partial_z$, gives
\[
\mathfrak g=\big\langle\,\partial_x,\ y\partial_x,\ \partial_y,\
(x+y)\partial_z,\ \partial_z,\ y\partial_z\,\big\rangle ,
\]
again of type (c) with centre $\langle\partial_z\rangle$ of dimension one, but
now split. The code returns the explicit complementary subalgebra
$\langle\partial_x,\,y\partial_x,\,\partial_y\rangle$.

{\footnotesize\begin{verbatim}
from vf_lie import *
x, y, z = coordinates("x y z"); C = (x, y, z)
g = build_centre_dim1(lifts=[(1, 0, 0), (y, 0, 0), (0, 1, 0)],
                      kernels=[x + y], coords=C)
print(len(g))                        # 6
print(lower_central_series(g, C))    # ([6, 3, 1, 0], True, 3)
print(rank(g, C))                    # 3
Z = centre(g, C); print(len(Z), rank(Z, C))   # 1  1
result = split_test(g, C)
print(result["split"])               # True
for v in result["complement"]:
    print(v)                         # (1, 0, 0), (y, 0, 0), (0, 1, 0)
\end{verbatim}}



\begin{thebibliography}{ZZZZ}

\bibitem[Am1]{Am1} U. Amaldi, Contributo all determinazione dei gruppi continui finiti dello
spazio ordinario I, {\it Giornale Mat. Battaglini Prog. Studi Univ. Ital.} {\bf 39} (1901),
273--316.\\
\url{https://archive.org/details/giornaledimatem04unkngoog/page/n295/mode/2up}

\bibitem[Am2]{Am2} U. Amaldi, Contributo all determinazione dei gruppi continui finiti dello
spazio ordinario II, {\it Giornale Mat. Battaglini Prog. Studi Univ. Ital.} {\bf 40} (1902),
105--141.\\
\url{https://archive.org/details/giornaledimatem11unkngoog/page/n116/mode/2up}

\bibitem[Az]{Az} H. Azad, A catalogue of the Lie Amaldi classification with structures identified,
arXiv:2606.23385.

\bibitem[ABM]{ABM} H. Azad, I. Biswas and F. M. Mahomed, Equality of the algebraic and geometric
ranks of Cartan subalgebras and applications to linearization of a system of ordinary
differential equations, {\it Internat. Jour. Math.} {\bf 28}, no. 11, (2017).

\bibitem[ABFM]{ABFM} H. Azad, I. Biswas, M. Fazil and F. M. Mahomed,
On Lie's classification of nonsolvable subalgebras of vector fields on the plane,
{\it Internat. Jour. Math.}, to appear.

\bibitem[ABMS]{ABMS} H. Azad, I. Biswas, F. M. Mahomed and S. W. Shah,
On Lie's classification of subalgebras of vector fields on the plane, {\it Proc.
Indian Acad. Sci. (Math. Sci.)} {\bf 132} (2022), Paper No. 66.

\bibitem[Du]{Du} B. Doubrov, Three-dimensional homogeneous spaces with non-solvable 
transformation groups, arXiv:1704.04393.

\bibitem[GKO]{GKO} A. Gonz\'alez-L\'opez, N. Kamran and P. J. Olver, Lie algebras of vector fields in
the real plane, {\it Proc. London Math. Soc.} {\bf 64} (1992), 339--368.

\bibitem[Hi]{Hi} A. Hillgarter, {\it Contribution to the symmetry classification problem for 2nd
order PDEs in one dependent and two independent variables},\\
\url{https://www3.risc.jku.at/publications/download/risc_1278/02-26.pdf}, PhD thesis, 2002.

\bibitem[LiEn]{LiEn} S. Lie and F. Engel, {\it Theorie der transformationsgruppen}, Vol. 3. Teubner, 1893,
\url{https://books.google.com/books/about/Theorie_der_Transformationsgruppen.html?id=QyXzhIvn2dYC#v=onepage&q&f=false}.

\bibitem[PBNL]{PBNL} R. O. Popovych, V. M. Boyko, M. O. Nesterenko and M. W. Lutfullin,
Realizations of real low-dimensional Lie algebras,
{\it J. Phys. A} {\bf 36} (2003), 7337--7360.

\bibitem[Sc]{Sc} E. Schneider, Projectable Lie algebras of vector fields in $3D$,
{\it Jour. Geom. Phys.} {\bf 132} (2018), 222--229.

\end{thebibliography}
\end{document}